\newcommand{\N}{\mathbb{N}}
\newcommand{\R}{\mathbb{R}}
\newcommand{\Z}{\mathbb{Z}}
\renewcommand{\oe}{\Omega_{\epsilon}}
\renewcommand{\ge}{\Gamma_{\epsilon}}
\newcommand{\oet}{\Omega_{\epsilon}(t)}
\newcommand{\get}{\Gamma_{\epsilon}(t)}
\newcommand{\seps}{S_{\epsilon}}
\newcommand{\jeps}{J_{\epsilon}}
\newcommand{\ie}{i.\,e.\,}
\newcommand{\tueps}{\tilde{u}_{\epsilon}}
\newcommand{\tqeps}{\tilde{q}_{\epsilon}}
\newcommand{\ueps}{u_{\epsilon}}
\newcommand{\deps}{D_{\epsilon}}
\newcommand{\qeps}{q_{\epsilon}}
\newcommand{\veps}{v_{\epsilon}}
\newcommand{\weps}{w_{\epsilon}}
\newcommand{\oeh}{\Omega_{\epsilon}^h}
\newcommand{\geh}{\Gamma_{\epsilon}^h}
\newcommand{\he}{\mathcal{H}_{\epsilon}}
\newcommand{\hoeh}{\mathcal{H}_0^1(\Omega_{\epsilon}^h)}
\newcommand{\tphi}{\widetilde{\phi}}
\newcommand{\oeth}{\Omega_{\epsilon}^{2h}}
\newcommand{\fxe}{\frac{x}{\epsilon}}
\newcommand{\te}{\mathcal{T}_{\epsilon}}
\newcommand{\tbe}{\mathcal{T}_{\epsilon}^b}
\newcommand{\ue}{\mathcal{U}_{\epsilon}}
\newcommand{\hoy}{\mathcal{H}_0^1(Y^{\ast})}
\newcommand{\per}{\mathrm{per}}
\newcommand{\peps}{\phi_{\epsilon}}
\newcommand{\tuo}{\tilde{u}_0}
\newtheorem{definition}{Definition}
\newtheorem{remark}{Remark}
\newtheorem{theorem}{Theorem}
\newtheorem{proposition}{Proposition}
\newtheorem{lemma}{Lemma}
\newtheorem{corollary}{Corollary}
\newtheorem{example}{Example}
\title{Homogenization of a reaction-diffusion-advection problem in an evolving micro-domain and including nonlinear boundary conditions}
\author{M. Gahn\thanks{Interdisciplinary Center for Scientific Computing, University of Heidelberg,   Heidelberg, Germany. \textit{Mail: markus.gahn@iwr.uni-heidelberg.de}} \and  M. Neuss-Radu\thanks{Department of Mathematics, Friedrich-Alexander-Universit\"at Erlangen-N\"urnberg,  Erlangen, Germany. \textit{Mail: maria.neuss-radu@math.fau.de }} \and I. S. Pop\thanks{Hasselt University, Faculty of Sciences, Agoralaan Gebouw D, Diepenbeek 3590, Belgium. \textit{Mail: sorin.pop@uhasselt.be}}}
\date{}
\begin{document}

\maketitle

\begin{abstract}
We consider a reaction-diffusion-advection problem in a perforated medium, with nonlinear reactions in the bulk and at the microscopic boundary, and slow diffusion scaling. The microstructure changes in time; the microstructural evolution is known \textit{a priori}. The aim of the paper is the rigorous derivation of a homogenized model. We use appropriately scaled function spaces, which allow us to show compactness results, especially regarding the time-derivative and we prove strong two-scale compactness results of Kolmogorov-Simon-type, which allow to pass to the limit in the nonlinear terms. The derived macroscopic model depends on the micro- and the macro-variable, and the evolution of the underlying microstructure is approximated by time- and space-dependent reference elements.
\end{abstract}

\noindent\textbf{Keywords:}
Homogenization; evolving micro-domain; strong two-scale convergence;  unfolding operator; reaction-diffusion-advection equation; nonlinear boundary condition

\noindent\textbf{MSC:}
35K57; 35B27

\section{Introduction}

In this paper, we consider a reaction-diffusion-advection problem in a perforated medium with evolving microstructure (the micro-domain). Such type of problems are encountered as mathematical models for mineral precipitation or biofilm growth in porous media \cite{Noorden2009, Bringedal2014, Schulzetal2016, SchulzKnabner}, manufacturing of steel \cite{EdenMuntean17}, or the swelling of mitochondria within biological cells \cite{SchefflerMitochondria}. 
Typically, in such mathematical models two spatial scales can be encountered: a microscopic scale representing e.g. the scale of pores in a porous medium and at which the processes can be described in detail, and a macroscopic one, representing e.g. the scale of the full domain. 

In situations like presented above, the microscopic processes can alter significantly the microstructure, which changes in time. The evolution of the micro-domain then depends on unknown quantities like the concentration of the transported species, or the fluid flow and the pressure. On the other hand, these unknowns do depend on the microstructural evolution, so one speaks about problems involving free boundaries at the microscopic scale. From a numerical point of view, such mathematical models are extremely complex, which makes simulations a challenging task. 

In this context, a natural approach is to derive upscaled, macroscopic mathematical models approximating the microscopic ones, and describing the averaged behaviour of the quantities. Compared to the models involving a fixed microstructure, the case in which the microscopic sub-domains occupied by the fluid and by the solid phases are separated by a moving interface requires additionally a way to express the microstructural evolution in time. One possibility is to use phase fields as smooth approximations of the characteristic functions of the two above-mentioned sub-domains (see e.g. \cite{RRP, BvWP} for a reactive transport model in a porous medium involving precipitation and dissolution). This has the advantage that the moving boundaries are approximated by diffuse transition regions and that the microscopic model is defined in a fixed domain. On the other hand, this introduces curvature effects in the evolution of this diffuse region and, implicitly, of the evolving interface. Furthermore, two different limits have to be considered: one in which the diffuse interface is reduced to a sharp one, and another in which the macroscopic model is obtained  from the microscopic one. Choosing the proper balance in this limit process is not trivial, in particular in a mathematically rigorous derivation. 

Alternatively, one can consider sharp interfaces separating the phase sub-domains. In the one-dimensional case, or for simple geometries like a (thin) strip or a radially symmetric tube, the distance to the lateral boundary can be used to locate the moving interface. For more complex situations like perforated domains, the moving boundaries can be defined as the 0-level set of a function solving a level set equation involving the other model unknowns. In this sense we refer to the micro-model proposed in \cite{Noorden2009} for crystal precipitation and dissolution in a porous medium. The evolution of microscopic precipitate layer is described through a level-set, and a macroscopic model is derived by a formal asymptotic expansion. In \cite{Bringedal2014}, this procedure is extended to account for non-isothermal effects. Similarly, in \cite{Schulzetal2016} the level set is used to describe the microscopic growth of a biofilm in a porous medium, and the corresponding macroscopic model is derived by formal homogenization techniques. 

For giving a mathematically rigorous derivation of the macroscopic model, and more precise of the  convergence of the limiting process, the main difficulties are in proving the existence of a solution for the microscopic, free boundary model, and in obtaining a priori estimates that are sufficient for a two-scale limit. In a first step towards the rigorous homogenization of problems involving free boundaries at the micro scale, here we consider a simplified situation and assume that the microstructural evolution is known \textit{a priori}. In other words,  we assume that the movement of the interface can be expressed through a time and space dependent mapping from a reference domain (see also \cite{Peter07, Peter09, EdenMuntean17}). 

In what follows, $\epsilon > 0$ is a small parameter describing the ratio between the typical micro-scale length and the one of the $n$-dimensional hyper-rectangle $\Omega$. It also represents the periodicity parameter of a fixed, periodically perforated domain $\oe$. 
At each time $t\in [0, T]$, the micro-domain (occupied by the fluid) is $\oet$, with boundary $\partial \oet$. Both are obtained as the image of a given mapping $\seps: [0, T] \times \overline{\Omega} \rightarrow \R^n$, namely $\oet = \seps(t, \oe)$ and $\partial \oe(t) = \seps(t, \partial \oe)$. Here, we assume that $\seps$ is a diffeomorphism and sufficiently regular with bounds uniformly with respect to $\epsilon$, see Assumptions \ref{AssumptionsAprioriSeps}  - \ref{AssumptionConvergenceTransformation} and Example \ref{Beispiel}.

Inside the moving domain $\oet$ the microscopic model is a reaction-diffusion-advection equation. The diffusion is assumed of order $\epsilon^2$, and the advection of order $\epsilon$. 
The aim of this paper is a mathematically rigorous derivation of a macroscopic model, of which solution approximates the one of the microscopic model. This is done by using rigorous multi-scale techniques such as two-scale convergence \cite{Nguetseng,Allaire_TwoScaleKonvergenz,Neuss_TwoScaleBoundary,AllaireDamlamianHornung_TwoScaleBoundary} and the unfolding method \cite{Cioranescu_Unfolding1,CioranescuDamlamianDonatoGrisoZakiUnfolding}, also known as periodic modulation or dilation operator \cite{ArbogastDouglasHornung,BourgeatLuckhausMikelic,VogtHomogenization}. In  doing so, the main challenge is to pass to the limit in the nonlinear terms. Hence, a crucial part of the paper is the derivation of strong two-scale compactness results based only on estimates for the solution of the micro-model.

Since the microstructural evolution is assumed known, to derive the macroscopic model we transform the microscopic problem, defined in the moving domain $\oet$, to a problem defined on the fixed, reference domain $\oe$. 
This leads to a change in the coefficients of the equations, which now depend on the transformation $\seps$ between $\oe$ and $\oet$. Especially, the time-derivative is applied to the product $\jeps \ueps$ involving the determinant of the Jacobian of $\seps$, $\jeps := \det(\nabla \seps)$. We mention that $\epsilon$-independent estimates are only obtained for the time-derivative $\partial_t(\jeps \ueps)$ and not for $\partial_t \ueps$, and therefore the product $\jeps \ueps$ is used to control the dependence of $\ueps$ on the time variable in the homogenization process. Additionally, the regularity of the product $\jeps \ueps$ with respect to time is insufficient to control the nonlinear boundary terms, and therefore refined arguments involving the time-derivative are required for the derivation of strong compactness results. A further challenge in the proof of the strong convergence is due to the small diffusion coefficient (of order $\epsilon^2$), which leads to oscillations of the solution with respect to  the spatial variable and thus to a gradient norm of order $\epsilon^{-1}$. Hence, the two-scale limit depends on a macro- and a micro-variable, and standard compactness results used  in the case of  diffusion coefficients of order $1$, like the extension of the solution to the whole domain, see \cite{Acerbi1992}, and Aubin-Lions-type compactness arguments, see \cite{MeirmanovZimin}, fail. To overcome these problems, we use the unfolding operator and prove strong convergence of the unfolded sequence in the $L^p$- sense by using a Banach-valued compactness argument of Kolmogorov-Simon type. These are based on \textit{a priori} estimates for the differences of the shifts with respect to the spatial variable and the control of the time variable via the time-derivative. For the latter, we prove a commuting property for the generalized time-derivative and the unfolding operator using  a duality argument, see also \cite{GahnNeussRaduKnabner2018a} where similar techniques were used for reaction-diffusion problems through thin heterogeneous layers. 

The derived effective model depends on both, the micro- and the macro-variable. The macro-variable lies in the fixed domain $\Omega$. In every macro-point of $x \in \Omega$, one has to solve a local cell problem with respect to the micro-variable, and on an evolving reference element. For such cell problems, the macro-variable $x$ enters rather as a parameter. Hence, the evolution of the micro-structure is passed on to the cell problems in the effective model.

Strictly referring to rigorous homogenization results for problems with an evolving microstructure, we mention that even if the microscopic evolution is known \textit{a priori}, there are only a few results available and these are for \textit{linear} problems, at least in the boundary terms. In this sense we mention \cite{Peter07}, dealing with a linear reaction-diffusion-advection problem in a two-phase medium, and with diffusion of order $\epsilon^2$ and $1$ in the different phases. Similarly, a model related to thermoelasticity in a two-phase domain was considered in \cite{EdenMuntean17}. In \cite{Peter09}, a reaction-diffusion problem with an evolving volume fraction (in our notation $\jeps$) is treated, where the volume fraction is described by an ordinary differential equation which is not coupled to the solution of the reaction-diffusion equation. 
A model for chemical degradation in multi-component evolving porous medium is considered in  \cite{peter2009multiscale} using a formal asymptotic expansion. The homogenization of a linear parabolic equation in a domain with a partly evolving and rapidly oscillating boundary is considered in \cite{muthukumar2018homogenization}.
In \cite{NoordenMuntean2011, NoordenMuntean2013} a macroscopic model is derived for a linear double porosity model in a locally periodic medium not evolving in time. The case of a locally periodic perforated domain that is also time dependent is considered in \cite{fotouhi2020homogenization}. There the homogenization of a linear Robin boundary value problem is analyzed by means of two-scale asymptotic expansion and corrector estimates. The homogenization of a linear parabolic equation in a perforated domain with rapidly pulsating (in time) periodic perforations, with a homogeneous Neumann condition on the boundary of the perforations was considered in  \cite{cioranescu2003homogenization}. This is extended in \cite{cioranescu2006homogenization} to the heat equation defined in a perforated medium with random rapidly pulsating perforations. 
A problem on a fixed domain, with diffusion of order $\epsilon^2$ and nonlinear bulk kinetics - however, without the coefficient $\jeps$ in the time-derivative - is treated in \cite{MielkeReicheltThomas2014}. The authors determine the strong convergence by estimating the difference between the unfolded equation and the solution to the macroscopic equation, which has to be known \textit{a priori}. However, an additional regularity and compatibility condition for the initial data is required. In \cite{Gahn,GahnNeussRaduKnabnerEffectiveModelSubstrateChanneling} a reaction-diffusion system for a two-component connected-disconnected medium was considered for fast diffusion (of order 1) and nonlinear interface conditions as in our problem, where in \cite{GahnNeussRaduKnabnerEffectiveModelSubstrateChanneling} an additional surface equation has been taken into account.

The paper is organized as follows: in Section \ref{SectionMicroscopicModel}, we introduce the microscopic model in an evolving domain and transform it to a fixed domain. In Section \ref{SectionAprioriEstimates}, the \textit{a priori} estimates used for the derivation of the macro-model are proved. In Section \ref{StrongTwoScaleResult}, we establish general strong two-scale compactness results and a convergence result for the time-derivative. The macroscopic model is derived in Section \ref{DerivationMacroscopicModel}. The paper ends with the conclusions in Section \ref{SectionConclusion}. 

\subsection{Original contributions}

Here we address the homogenization of a reactive transport model in an evolving perforated domain. The evolution of the microstructure is assumed known \textit{a priori}. Since nonlinear bulk and surface reactions are considered, the derivation of strong multi-scale compactness results is essential. The main results are:
\begin{enumerate}
[label = - ]
\item The derivation of \textit{a priori} estimates for the micro-problem with coefficients depending on the transformation between the time-dependent and the fixed domain, and especially of the estimates for  the differences between the shifted solution and the solution itself, see Lemma \ref{AprioriEstimates} and \ref{ApriorEstimatesShifts}; 
\item The introduction of the space $\he$ with a weighted Sobolev-norm adapted to the slow diffusion scaling (of order $\epsilon^2$), which is the basis for weak and strong two-scale compactness results, see Section \ref{StrongTwoScaleResult}; 
\item The commuting property of the generalized time-derivative and the unfolding operator, based on a duality argument, see Proposition \ref{DerivativeUnfoldingOperator};
\item The general strong two-scale compactness result of Kolmogorov-Simon-type, see Theorem \ref{StrongTwoScaleCompactness}; 
\item The two-scale compactness result for the generalized time-derivative, see Proposition \ref{TimeDerivativeTSConvergence}; 
\item The derivation of a homogenized model for a reaction-diffusion-advection equation in an evolving domain, with nonlinear reaction-kinetics and low regularity for the time-derivative, see Section \ref{DerivationMacroscopicModel}.
\end{enumerate}

The resulting homogenized model \eqref{HomogenizedModelEvolving} is a reaction-diffusion-advection equation involving a macro-variable $x\in \Omega$ and a micro-variable $y \in Y(t,x)$. More precisely, for every time $t \in [0,T]$ and for every macroscopic point $x \in \Omega$, $Y(t,x)$ denotes a reference element. Observe that $Y(\cdot ,x)$ evolves in time and that the macro-variable $x$ only acts as a parameter. Hence, a parabolic problem has to be solved in every point $x \in \Omega$, in the evolving cell $Y(\cdot ,x)$. The evolution of the micro-domain $\oet$ in the microscopic model enters in the homogenized model via the reference cells $Y(t,x)$. The macro-domain $\Omega$ remains fixed, since we only consider (small) deformations of order $\epsilon$ of the micro-cells and of the outer boundary.

\section{The microscopic model}
\label{SectionMicroscopicModel}

With $n \in \N$, $n > 1$ and $a,b \in \N^n$ such that $a_i < b_i$ for all $i = 1, \dots n$, we consider the hyper-rectangle $\Omega = (a,b)\subset \R^n$ as the macroscopic domain. Further $\epsilon >0$ is a small parameter such that $\epsilon^{-1} \in \N$. 
$\oe \subset \Omega$ is a fixed, periodically perforated microscopic domain, constructed as follows. With $Y:=(0,1)^n$, we let $Y^{\ast} \subset Y$ be a connected subdomain with Lipschitz-boundary and opposite faces matching each other, \ie for $i=1,\ldots, n$ it holds that
\begin{align*}
\partial Y^{\ast} \cap \{x_i = 0\} + e_i = \partial Y^{\ast} \cap \{x_i = 1\}.
\end{align*}
We define $\Gamma:= \partial Y^{\ast} \setminus \partial Y$ and assume that $\Gamma $ is a Lipschitz boundary. Further, let $K_{\epsilon}:= \left\{k \in \Z^n\, : \, \epsilon(Y + k) \subset \Omega \right\}$. Clearly, $\Omega = \mathrm{int} \left(\bigcup_{k \in K_{\epsilon}}  \epsilon \big(\overline{Y} + k \big) \right)
$. Now we define $\oe$ by
\begin{align*}
\oe:= \mathrm{int} \left(\bigcup_{k \in K_{\epsilon}}  \epsilon \big(\overline{Y^{\ast}} + k \big) \right)
\end{align*}
and the oscillating boundary $\ge$ as 
\begin{align*}
\ge := \partial \oe \setminus \partial \Omega.
\end{align*}
We assume that $\oe $ is connected and has Lipschitz-boundary. We emphasize that the complement $\Omega \setminus \oe$ may be connected (if $n\geq 3$) or disconnected. 

For   $t \in [0,T]$, the evolving domain $\oet$ and the evolving surface $\get$ are described by a mapping $\seps: [0, T] \times \overline{\Omega} \to \R^n$,
\begin{align*}
\seps(t,\cdot): \overline{\oe} \rightarrow \overline{\oet},
\end{align*}
with $\get = \seps(t,\ge)$, see Figure \ref{FigurTransformation}. The Jacobi determinant of $\seps$ is denoted by $\jeps$, \ie we have $\jeps := \det \big(\nabla \seps\big)$. 

\begin{figure}
\includegraphics[scale=0.63]{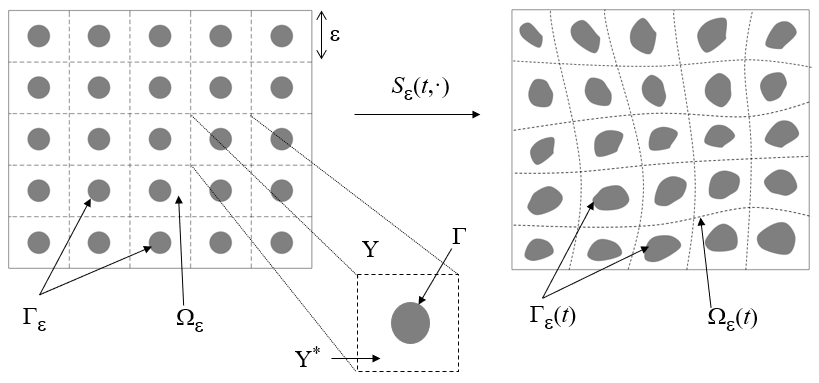}
\caption{The fixed domain $\oe$  and the time-dependent domain $\oet$ obtained through the mapping $\seps(t, \cdot)$.}
\label{FigurTransformation}
\end{figure}

Now we define the non-cylindrical domains $Q_{\epsilon}^T$ and $G_{\epsilon}^T$ by
\begin{align*}
Q_{\epsilon}^T := \bigcup_{t\in (0,T)} \{t\} \times \oet, \quad\quad G_{\epsilon}^T := \bigcup_{t\in (0,T)} \{t\} \times \get, 
\end{align*}
and consider \textbf{Problem P}, namely to find $\tueps$ satisfying:
\begin{align}
\begin{aligned}
\label{MicProblemEvolvingDomain}
\partial_t \tueps -  \nabla \cdot \big(\epsilon^2 D \nabla \tueps  - \epsilon \tqeps \tueps\big) &= f(\tueps) &\mbox{ in }& Q_{\epsilon}^T
\\
-   \epsilon^2 D \nabla \tueps  \cdot \nu &= -\epsilon g(\tueps ) &\mbox{ on }& G_{\epsilon}^T,
\\
-   \epsilon^2 D \nabla \tueps    \cdot \nu &= 0 &\mbox{ on }& \bigcup_{t\in (0,T)} \{t\} \times \partial \oet \setminus \get,
\\
\tueps(0)&= \tueps^0 &\mbox{ in }& \oe(0).
\end{aligned}
\end{align}
Here, $\nu$ denotes the outer unit normal with respect to $\oet$, $\epsilon^2 D$ is the diffusion-tensor, and  $\epsilon \tqeps : \overline{\oe} \to \R^n$ is a material velocity with the property   that at the moving surface $\partial \oet$ it is equal to the evolution of the surface, $\epsilon \tqeps(t,\cdot) \cdot \nu = \partial_t \seps\big(t,\seps(t,\cdot)^{-1}\big) \cdot \nu $. We emphasize that this condition is needed for the derivation of the weak formulation of the problem, but not for the analysis. Therefore it is not stated in Assumption \ref{AssumptionQeps}, referring to $\tqeps$. The functions $f$ and $\epsilon g$ describe the reaction kinetics in the bulk domain $\oet$, respectively at the surface $\get$.
For more details about the derivation of such models we refer to e.g. \cite{AlphonseEtal2018}.

We use common notations in the functional analysis. For a Banach space $X$ with norm $\|\cdot\|_X$ we also use this notation instead of $\|\cdot\|_{X^n}$ for the norm in the product space $X^n = X \times \ldots \times X$. Also, we use $C > 0 $ as a generic constant independent of $\epsilon$. With this, 
and assuming that all integrals are well defined,  we obtain by integration by parts and the Reynolds transport theorem the following weak form of Problem P (as given in \eqref{MicProblemEvolvingDomain}): \\[0.5em]
\textbf{Problem P$_W$}. 
Find $\tueps \in L^2((0,T),H^1(\oe(t)))$  such that for all $\phi \in C^1\big(\overline{Q_{\epsilon}^T}\big)$ with $\phi(T,\cdot) = 0$ it holds that
\begin{align*}
-\int_0^T \int_{\oet}& \tueps \partial_t \phi dx dt + \int_0^T \int_{\oet} \big[ \epsilon^2 D \nabla \tueps - \epsilon \tqeps \tueps \big] \cdot \nabla \phi dx dt 
\\
=&\int_0^T \int_{\oet} f(\tueps) \phi dx dt + \epsilon \int_0^T \int_{\get} g(\tueps) \phi d\sigma dt 
+ \int_{\oe(0)} \tueps^0 \phi(0) dx 
\end{align*}
%

Below we state the 
\noindent\textbf{assumptions on $\seps$:}
\begin{enumerate}
[label = (A\arabic*)]
\item\label{AssumptionsAprioriSeps} $\seps \in C^1([0,T] \times \overline{\oe})^n$ with 
\begin{align*}
\frac{1}{\epsilon}\|\partial_t \seps \|_{C^0([0,T] \times \overline{\oe})} + 
\|\seps\|_{C^0([0,T]\times \overline{\oe})} + \Vert \nabla \seps \Vert_{C^0([0,T]\times \overline{\oe})} &\le C.
\end{align*}
\item There exist constants $c_0, C_0> 0$ independently of $\epsilon$, such that 
\begin{align*}
c_0 \le \jeps \le C_0.
\end{align*}
\item\label{AssumptionBoundsJeps} We have $\jeps \in C^1([0,T]\times \overline{\oe})$ with 
\begin{align*}
\|\partial_t \jeps \|_{L^2((0,T),\he')} + 
\epsilon \|\nabla \jeps \|_{L^{\infty}((0,T)\times \oe)} \le C. 
\end{align*}
For the definition of the space $\he$ we refer to Section \ref{SectionAprioriEstimates}.
\item\label{AssumptionEstimateShiftGradientJeps} For any
$\ell \in \Z^n$ with $\vert \epsilon l \vert < h$ and $0 < h \ll 1$ fixed, it holds that 
\begin{align*}
\|\jeps (\cdot_t, \cdot_x + \ell \epsilon ) - \jeps \|_{C^0([0,T] \times \overline{\oeh})  } &\le C |\ell \epsilon|, \mbox{ and }
\\
\|\nabla \jeps (\cdot_t, \cdot_x + \ell \epsilon ) - \nabla \jeps \|_{L^{\infty}((0,T)\times \oeh) } &\le C |\ell|,
\end{align*}
For the definition of the domain $\oeh$ we refer to Section \ref{SectionAprioriEstimates}. 
\item\label{AssumptionEstimateShiftVelocitySurface} For any $\ell \in \Z^n$ it holds that
\begin{align*}
\Vert \veps (\cdot_t, \cdot_x + \ell\epsilon) - \veps \Vert_{C^0([0,T] \times \overline{\oeh})} \le C \vert \ell \vert \epsilon^2,
\end{align*}
with $\veps(t,x):= \nabla \seps(t,x)^{-1} \partial_t \seps(t,x)$ and $0 < h \ll 1$ fixed.
\item\label{AssumptionConvergenceTransformation} There exists $S_0 \in C^0\big( \overline{\Omega} , C^1\big( [0,T]\times\overline{Y^{\ast}}\big)\big)^n$  such that $S_0(t,x,\cdot_y)$ is $Y$-periodic and  $S_0(t,x,\cdot):Y^{\ast} \rightarrow Y(t,x):= \mathrm{R(S_0(t,x,\cdot))}$ (the range of $S_0(t,x,\cdot_y)$) is a $C^1$-diffeomorphism and (for the definition of the two-scale convergence see Section \ref{StrongTwoScaleResult}) 
\begin{align*}
\seps(t,x) &\rightarrow x &\mbox{ strongly in the two-scale sense,}
\\
\nabla \seps(t,x) &\rightarrow \nabla_y S_0(t,x,y) &\mbox{ strongly in the two-scale sense,}
\\
\nabla \seps^{-1}(t,x) &\rightarrow \nabla_y S_0^{-1}(t,x,y) &\mbox{ strongly in the two-scale sense,}
\\
\epsilon^{-1} \partial_t \seps(t,x,y) &\rightarrow \partial_t S_0(t,x,y) &\mbox{ strongly in the two-scale sense.}
\end{align*}
\end{enumerate}
 Especially, it holds that $\jeps \rightarrow J_0:= \det \nabla_y S_0$ strongly in the two-scale sense. We emphasize that due to the estimates in Assumption \ref{AssumptionsAprioriSeps} the strong two-scale convergences in \ref{AssumptionConvergenceTransformation}  hold with respect to every $L^p$-norm for $p \in [1,\infty)$ arbitrary large. For the derivation of the macroscopic model we need the strong convergence of $\te \jeps$, where $\te $ denotes the unfolding operator, see Section \ref{StrongTwoScaleResult}.

In the following we give an example of a transformation $\seps$, for which the assumptions \ref{AssumptionsAprioriSeps} - \ref{AssumptionConvergenceTransformation} are fulfilled.

\begin{example}\label{Beispiel}
Let us consider that $Y \setminus \overline{Y^{\ast}} $ is strictly included in $Y$, \ie \,$\Gamma$ is a compact subset of $Y$. Furthermore, let $\omega(t,x,y)$ with $\omega: [0,T] \times \overline{\Omega} \times \overline{Y} \rightarrow \R$ be a smooth function, which is $Y$-periodic with respect to variable $y$. We assume that for every $(t,x) \in [0,T] \times \overline{\Omega}$ the set
\begin{align}\label{BeispielEvolution}
\Gamma(t,x) := \left\{ y + \omega(t,x,y) \nu_0(y) \, : \, y \in \Gamma\right\}  \subset Y
\end{align}
is a closed $C^2$-manifold  with $\Gamma(0,x)=\Gamma$. Here $\nu_0$ denotes the outer unit normal vector with respect to $Y^{\ast}$.  Since $\Gamma $ is $C^2$, the outer unit normal $\nu_0$ can be extended to a tubular neighborhood of $\Gamma$, such that it has compact support in this neighborhood and has $C^1$-regularity. The properties of $\Gamma(t,x)$ imply the existence of cubes $W_o$ and $W_i$, such that $W_i \subset W_o \subset Y$ and 
\begin{align*}
\Lambda:= \bigcup_{(t,x)\in [0,T]\times \overline{\Omega}} \Gamma(t,x) \subset W_o \setminus \overline{W_i}.
\end{align*}
Let us define a cut-off function $\chi_0 \in C_0^{\infty}\big(W_o \setminus \overline{W_i}\big)$ with $0 \le \chi_0 \le 1$ and $\chi_0 = 1$  in $\Lambda$. Now, we define the transformation $\seps$ by
\begin{align*}
\seps (t,x):= x + \epsilon \omega\left(t,\left[\fxe\right],\fxe \right) \chi_0 \left(\fxe\right) \nu_0 \left(\fxe\right).
\end{align*}
This means, that the moving interface $\get$ can be described locally on every microscopic cell. With $S_0$ defined by
\begin{align*}
S_0(t,x,y):= y + \omega(t,x,y)\chi_0(y) \nu_0(y),
\end{align*}
the sequence $\seps$ fulfills the properties \ref{AssumptionsAprioriSeps} - \ref{AssumptionConvergenceTransformation}.
\end{example}
 
\begin{remark}\mbox{}
\begin{enumerate}
[label = (\roman*)]
\item In  applications like the ones mentioned in the introduction, the transformation $\seps$ is not known a priori, but is itself a model unknown. For example, when considering dissolution and precipitation in a porous medium, the evolution of $\get$ is determined by the two processes named above, which depend on the concentration of the solute at the pore walls. Conversely, $\oet$ is the domain in which the solute transport model component is defined, so the solute concentration depends on the evolution of the free boundary $\get$. This leads to a microscopic, moving interface problem. From the evolution of $\get$ the transformation $\seps$ can be  constructed for example via the Hanzawa-transformation \cite{Hanzawa81}, see also the monograph \cite{PruessSimonettBook16} for an overview on this topic. Regarding Example \ref{Beispiel}, the crucial point is to find the function $\omega$ and to guarantee the condition $\eqref{BeispielEvolution} $ together with the regularity of $\Gamma(t,x)$. Often this is only possible locally with respect to time, which rises additional difficulties in the homogenization procedure.
\item  The strong regularity assumptions on $\seps$ and the uniform estimates with respect to $\epsilon$ are necessary for the analysis below. A relaxation of the assumptions would make the analysis significantly more difficult, beginning with the \textit{a priori} estimates and the existence of a solution. However, from the application point of view the transformation describes small microscopic deformation, as it occurs for example in mineral precipitation. Especially, Assumptions \ref{AssumptionsAprioriSeps} and \ref{AssumptionConvergenceTransformation} imply that the microscopic deformations taken into account are small at both the micro-scale (the boundary of the perforations) and the macro-scale (the outer boundary). In other words, the displacement of every point with respect to its initial configuration is of order $\epsilon$. Assumptions \ref{AssumptionEstimateShiftGradientJeps}  and \ref{AssumptionEstimateShiftVelocitySurface} say that the shapes of neighboring microscopic cells are similar. Further, from \ref{AssumptionsAprioriSeps} it follows that no clogging occurs.
%
%
\end{enumerate}

\end{remark}

Finally, we state the \textbf{assumptions on the data:}
\begin{enumerate}
[label = (AD\arabic*)]
\item\label{AssumptionQeps} It holds that $\tqeps \in L^{\infty}(Q_{\epsilon}^T)^n$  and 
\begin{align*}
\|\tqeps\|_{L^{\infty}(Q_{\epsilon}^T)} \le C.
\end{align*}
Further, for $\qeps(t,x):= \nabla \seps(t,x)^{-1} \tqeps(t,\seps(t,x))$ for almost every $(t,x) \in (0,T) \times  \oe$
 we assume that for any $\ell \in \Z^n$ it holds that
\begin{align*}
\|\qeps  (\cdot_t, \cdot_x + \ell\epsilon ) - \qeps \|_{L^{\infty}((0,T)\times \oeh)} \le C|\ell\epsilon| .
\end{align*}
There exists a $Y$-periodic function $q_0  \in L^{\infty}((0,T)\times \Omega \times Y^{\ast})$  such that
\begin{align*}
\tqeps \rightarrow \nabla_y S_0^{-1}q_0  \quad \mbox{ in the two-scale sense,}
\end{align*}
see Section \ref{StrongTwoScaleResult} for the definition of two-scale convergence.
\item It holds that $f, \, g \in L^2((0,T)\times \R)$, and for almost every $t \in (0,T)$ the functions $z \mapsto f(t,z)$ and $z \mapsto g(t,z)$ are globally Lipschitz-continuous uniformly with respect to $t$.
\item The diffusion tensor $D \in \R^{n\times n}$ is symmetric and positive.
\item\label{AssumptionInitialConditions}  For $\ueps^0(x)= \tueps^0(\seps(0,x))$ it holds that  $\ueps^0 \in L^2(\oe)$ are bounded uniformly with respect to $\epsilon$ and for any $\ell \in \Z^n$ such that $|\ell\epsilon|<h$ with $0 < h \ll 1$ fixed, it holds that
\begin{align*}
\|\ueps^0(\cdot + \ell\epsilon) - \ueps^0\|_{L^2(\oeh)} \overset{\ell\epsilon \to 0}{\longrightarrow} 0.
\end{align*}
Further, there exists $u^0 \in L^2(\Omega \times Y^{\ast})$, such that $\ueps^0  \rightarrow u^0$ in the two-scale sense. 
\end{enumerate}

Using the mapping $\seps$, we transform the problem in $\eqref{MicProblemEvolvingDomain}$ to the fixed domain $\oe$. Let us define 
\begin{align*}
\ueps: (0,T)\times \oe \rightarrow \R, \quad \ueps(t,x) := \tueps (t, \seps(t,x))
\end{align*}
and  for $(t,x) \in (0,T)\times \oe$
\begin{align*}
\deps(t,x)&:=\nabla \seps(t,x)^{-1} D \nabla \seps(t,x)^{-T},
\\
\qeps(t,x)&:= \nabla \seps(t,x)^{-1} \tqeps(t,\seps(t,x)) ,
\\
\veps(t,x)&:= \nabla \seps(t,x)^{-1} \partial_t \seps(t,x). 
\end{align*}
Then, by a change of coordinates
, we transform \textbf{Problem P$_T$} into \textbf{Problem P$_T$}, which is defined on the fixed domain $\oe$. We seek $\ueps$ satisfying 
\begin{align}
\begin{aligned}
\label{MicProblemFixedDomain}
\partial_t \big(\jeps\ueps\big) - \nabla \cdot \big( \epsilon^2 \jeps \deps \nabla \ueps - \epsilon \jeps \qeps \ueps + \jeps \veps \ueps \big) &= \jeps f(\ueps) &\mbox{ in }& (0,T)\times \oe,
\\
- \epsilon^2\jeps \deps \nabla \ueps \cdot \nu &= - \epsilon \jeps g(\ueps) 
  &\mbox{ on }& (0,T)\times \ge ,
\\
- \epsilon^2\jeps \deps \nabla \ueps  \cdot \nu &= 0 &\mbox{ on }& (0,T)\times \partial \Omega,
\\
\ueps(0) &= \ueps^0 &\mbox{ in }& \oe.
\end{aligned}
\end{align}

In the following, given a bounded domain $U\subset \R^n$, the duality pairing between $H^1(U)'$ and $H^1(U)$ is denoted by $\langle \cdot , \cdot \rangle_{U}$. With this we define a weak solution of Problem P$_T$ (introduced in \eqref{MicProblemFixedDomain}): 
\begin{definition}
A weak solution of the micro-problem P$_T$ is a function $\ueps \in L^2((0,T),H^1(\oe)) $, such that $\partial_t (\jeps \ueps) \in L^2((0,T),H^1(\oe)'))$, $\ueps(0) = \ueps^0$, and for every $\phi \in H^1(\oe)$ and almost every $t \in (0,T)$ it holds that
\begin{align}
\begin{aligned}
\label{VarEquaMicProblemFixedDomain}
\big\langle \partial_t (\jeps \ueps) ,& \phi \big\rangle_{\oe}+ \int_{\oe} \big[\epsilon^2 \jeps \deps \nabla \ueps - \epsilon \jeps \qeps \ueps + \jeps \veps \ueps \big) \cdot \nabla \phi dx 
\\
&= \int_{\oe} \jeps f(\ueps) \phi dx  + \epsilon \int_{\ge} \jeps g(\ueps) \phi d\sigma 
\end{aligned}
\end{align}
\end{definition}
The regularity of $\partial_t (\jeps \ueps)$ and the regularity of $\jeps$ immediately imply  $\partial_t \ueps \in L^2((0,T),H^1(\oe)')$. However, as will be seen below the estimates for the norm of $\partial_t \ueps$  are not uniform with respect to $\epsilon$, therefore we work with the product $\jeps \ueps$.
We emphasize that in the variational equation $\eqref{VarEquaMicProblemFixedDomain}
$ the time-derivative is applied to $\jeps \ueps$. This is unlike done Problem $P_W$, where the time derivative was applied to the test-function to avoid working with generalized time-derivatives for $\tueps$ defined in Bochner-spaces with values in time-dependent function spaces. We emphasise that the analysis below is done for the transformed problem \textbf{P$_T$}.
\begin{proposition}
There exists a unique weak solution of Problem $\eqref{MicProblemFixedDomain}$.
\end{proposition}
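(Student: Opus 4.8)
The plan is to prove existence for a fixed $\epsilon>0$ by a Galerkin approximation and to obtain uniqueness from an energy estimate for the difference of two solutions; no $\epsilon$-uniformity is required here, this being the subject of Lemma \ref{AprioriEstimates}. The problem is semilinear: the only nonlinearities $f(\ueps)$ and $g(\ueps)$ are globally Lipschitz by (AD2), the principal part $\epsilon^2\jeps\deps\nabla\ueps$ is coercive since $D$ is symmetric positive and $\jeps\geq c_0>0$, so that $\jeps\deps = \jeps\,\nabla\seps^{-1}D\nabla\seps^{-T}$ is uniformly elliptic for fixed $\epsilon$, and the remaining transport terms $-\epsilon\jeps\qeps\ueps + \jeps\veps\ueps$ are of lower order with bounded coefficients. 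The only structural novelty is that the time derivative acts on the weighted quantity $\jeps\ueps$; since $\jeps\in C^1([0,T]\times\overline{\oe})$ with $c_0\leq\jeps\leq C_0$, multiplication by $\jeps$ is an isomorphism of $H^1(\oe)$ and of $L^2(\oe)$ and $\partial_t\jeps\in C^0$ is bounded (for fixed $\epsilon$), which is what makes the weight harmless below.

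First I would fix a basis $(\phi_k)_{k\in\N}$ of $H^1(\oe)$ whose finite spans are dense, and seek $\ueps^N(t)=\sum_{k=1}^N c_k^N(t)\phi_k$ solving the projection of \eqref{VarEquaMicProblemFixedDomain} onto $\mathrm{span}(\phi_1,\dots,\phi_N)$. Writing this out produces the finite-dimensional system $\frac{d}{dt}\big(M(t)c^N(t)\big)=F\big(t,c^N(t)\big)$, where the mass matrix $M(t)_{jk}=\int_{\oe}\jeps(t)\phi_k\phi_j\,dx$ is $C^1$ in $t$ and, because $\jeps\geq c_0>0$, symmetric positive definite hence invertible with $C^1$ inverse. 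The right-hand side $F$ collects the diffusion, transport and reaction contributions and is continuous in $t$ and globally Lipschitz in $c^N$, thanks to the Lipschitz continuity of $f$ and $g$ and the trace theorem. Substituting $d^N=Mc^N$ turns the system into a standard explicit ODE $\dot d^N=F(t,M^{-1}d^N)$ to which Picard--Lindel\"of applies, giving a unique local solution; the a priori bound below upgrades this to a solution on all of $[0,T]$.

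Next I would derive the basic energy estimate by testing the Galerkin equation with $\ueps^N$ itself. The decisive computation is the weighted integration by parts in time
\begin{align*}
\big\langle \partial_t(\jeps\ueps^N),\ueps^N\big\rangle_{\oe} = \tfrac12\frac{d}{dt}\int_{\oe}\jeps\,(\ueps^N)^2\,dx + \tfrac12\int_{\oe}(\partial_t\jeps)\,(\ueps^N)^2\,dx,
\end{align*}
whose second term is controlled by $\|\partial_t\jeps\|_{L^\infty}\|\ueps^N\|_{L^2(\oe)}^2$ (finite for fixed $\epsilon$). Combining this with coercivity of the diffusion term, Young's inequality for the transport and bulk-reaction terms, and the trace inequality together with the Lipschitz bound $|g(z)|\leq|g(0)|+L|z|$ for the surface term (absorbing the gradient contribution into the coercive part), Gr\"onwall's lemma yields a bound for $\ueps^N$ in $L^\infty((0,T),L^2(\oe))\cap L^2((0,T),H^1(\oe))$. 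Testing with arbitrary $\phi\in H^1(\oe)$ and using this bound then controls $\partial_t(\jeps\ueps^N)$ in $L^2((0,T),H^1(\oe)')$. With these uniform-in-$N$ estimates I would extract weakly/weakly-$\ast$ convergent subsequences; since $\jeps\ueps^N$ is bounded in $L^2((0,T),H^1(\oe))$ with $\partial_t(\jeps\ueps^N)$ bounded in $L^2((0,T),H^1(\oe)')$, the Aubin--Lions lemma gives strong convergence of $\jeps\ueps^N$, hence of $\ueps^N$, in $L^2((0,T)\times\oe)$ and, via compactness of the trace, in $L^2((0,T)\times\ge)$. This strong convergence is exactly what is needed to pass to the limit in $f(\ueps^N)$ and $g(\ueps^N)$, while linearity handles the remaining terms; identifying the limit of $\partial_t(\jeps\ueps^N)$ with $\partial_t(\jeps\ueps)$ and recovering the initial datum by the usual integration by parts in time completes the existence proof.

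For uniqueness I would take two solutions $\ueps^{(1)},\ueps^{(2)}$, subtract the two weak formulations, and test with the difference $v:=\ueps^{(1)}-\ueps^{(2)}$. The weighted time-derivative identity applied to $v$, the coercivity of the principal part, and the Lipschitz estimates $|f(\ueps^{(1)})-f(\ueps^{(2)})|\leq L|v|$ and $|g(\ueps^{(1)})-g(\ueps^{(2)})|\leq L|v|$ reduce everything to a differential inequality $\frac{d}{dt}\int_{\oe}\jeps v^2\,dx \leq C_\epsilon\int_{\oe}\jeps v^2\,dx$, whence $v\equiv0$ by Gr\"onwall together with $v(0)=0$. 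The step I expect to be the main obstacle is the rigorous justification of the weighted integration-by-parts-in-time formula above: for a pairing of $\partial_t(\jeps v)\in L^2((0,T),H^1(\oe)')$ against $v\in L^2((0,T),H^1(\oe))$ the map $t\mapsto\int_{\oe}\jeps v^2\,dx$ must be shown to be absolutely continuous with the stated derivative, which is a variant of the Lions--Magenes / Alikakos--Evans lemma adapted to the $C^1$ weight $\jeps$. I would establish it by a time-mollification argument, using that $\jeps\in C^1$ commutes with the regularization up to controllable commutator terms and that $\partial_t\jeps\in L^\infty$ for fixed $\epsilon$ keeps the lower-order contribution integrable; this same lemma is what legitimises the energy identity used both in the a priori estimate and in the uniqueness argument.
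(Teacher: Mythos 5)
Your proposal follows exactly the route the paper indicates: the paper's entire proof is the remark that the result ``follows e.g.\ by applying the Galerkin-method, and after obtaining estimates that are similar to the ones in Lemma~\ref{AprioriEstimates}'', and your argument is precisely that strategy carried out in detail, including the weighted identity for $\big\langle \partial_t(\jeps\ueps),\ueps\big\rangle_{\oe}$ that the paper itself uses in the proof of Lemma~\ref{AprioriEstimates}. You simply supply the standard details (mass-matrix ODE system, energy estimate, Aubin--Lions, Gr\"onwall for uniqueness) that the authors omit, correctly noting that no $\epsilon$-uniformity is needed at this stage.
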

\begin{proof}
This follows e.g. by applying the Galerkin-method, and after obtaining estimates that are similar to the ones in Lemma \ref{AprioriEstimates}. We omit the details.
\end{proof}

\section{\textit{A priori} estimates}
\label{SectionAprioriEstimates}

We first introduce an $H^1(\oe)$-equivalent norm, which is more adapted to the $\epsilon$-scaling in the problem. We denote by $\he$  the space of $H^1(\oe)$ functions, equipped with the inner product
\begin{align}
\label{SPhe}
(\ueps,\weps)_{\he} := (\ueps,\weps)_{L^2(\oe)} + \epsilon^2 (\nabla \ueps , \nabla \weps )_{L^2(\oe)}. 
\end{align}
The associated norm is denoted by $\|\cdot \|_{\he}$. The spaces $H^1(\oe)$ and $\he$ are isomorphic with equivalent norms.  
%
%
More precisely, 
\begin{align*}
\Vert \peps \Vert_{\he} \le \Vert \peps \Vert_{H^1(\oe)} \le \epsilon^{-1} \Vert \peps \Vert_{\he} \quad \mbox{ for all } \peps \in H^1(\oe).
\end{align*}
 We have the embedding
\begin{align*}
L^2((0,T),\he') \hookrightarrow L^2((0,T),H^1(\oe)'),
\end{align*}
and for $F_{\epsilon} \in L^2((0,T),\he')$ it holds that
\begin{align*}
\Vert F_{\epsilon} \Vert_{L^2((0,T),H^1(\oe)')}\le \Vert F_{\epsilon} \Vert_{L^2((0,T),\he')}.
\end{align*}
Especially, for any $\weps \in L^2((0,T),\he)\cap H^1((0,T),\he')$ it holds that
\begin{align*}
\langle \partial_t \weps ,\peps \rangle_{\he',\he} = \langle \partial_t \weps ,\peps \rangle_{H^1(\oe)',H^1(\oe)} \quad \mbox{for all }\peps \in H^1(\oe).
\end{align*}
Clearly, the converse embedding also holds: if $F_{\epsilon} \in L^2((0,T),H^1(\oe)')$ then $F_{\epsilon} \in L^2((0,T),\he')$ as well. However, in this case the estimates of the operator norms in the embedding depend badly on $\epsilon$, 
\begin{align*}
\Vert F_{\epsilon} \Vert_{L^2((0,T),\he')} \le \epsilon^{-1} \Vert F_{\epsilon}\Vert_{L^2((0,T),H^1(\oe)')}.
\end{align*}
We will see in Section \ref{StrongTwoScaleResult} that the norm on $\he'$ is the appropriate norm for the time-derivative to obtain two-scale compactness results.

The following Lemma provides some basic a priori estimates for the solution $\ueps$  and the product $\jeps \ueps$.
\begin{lemma}\label{AprioriEstimates}
A constant $C >0$ not depending on $\epsilon$ exists such that for the weak solution $\ueps$ of Problem P$_T$ 
one has 
\begin{subequations}
\begin{align}
\label{AprioriEstimateUeps}
\|\ueps\|_{L^{\infty}((0,T),L^2(\oe))} +  \| \ueps \|_{L^2((0,T), \he)} &\le C,
\\
\label{AprioriEstimateTimeDerivativeJepsUeps}
\|\partial_t (\jeps \ueps) \|_{L^2((0,T),\he')} + \Vert \jeps \ueps \Vert_{L^2((0,T),\he)} &\le C.
\end{align}
\end{subequations}
\end{lemma}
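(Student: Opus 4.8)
The plan is to run a standard energy estimate, but---and this is the \emph{decisive} choice---to test the variational equation \eqref{VarEquaMicProblemFixedDomain} with $\weps := \jeps\ueps$ rather than with $\ueps$ itself. Since $\weps \in L^2((0,T),\he)\cap H^1((0,T),\he')$ and $\he \hookrightarrow L^2(\oe) \hookrightarrow \he'$ is a Gelfand triple with pivot $L^2(\oe)$, the generalized integration-by-parts formula gives $\langle \partial_t(\jeps\ueps),\weps\rangle_{\oe} = \tfrac{1}{2}\tfrac{d}{dt}\|\weps\|_{L^2(\oe)}^2$ and $\weps \in C([0,T],L^2(\oe))$. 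In this way the awkward term $\tfrac{1}{2}\int_{\oe}\partial_t\jeps\,\ueps^2\,dx$, which would appear when testing with $\ueps$ and which cannot be controlled by the mere $\he'$-bound on $\partial_t\jeps$ from Assumption \ref{AssumptionBoundsJeps}, never arises. Inserting $\phi=\weps(t)$ for a.e.\ $t$ and integrating over $(0,\tau)$ then produces an identity for $\tfrac{1}{2}\|\weps(\tau)\|_{L^2(\oe)}^2$, with initial contribution $\|\weps(0)\|_{L^2(\oe)}=\|\jeps(0)\ueps^0\|_{L^2(\oe)}\le C$ by Assumption \ref{AssumptionInitialConditions}.

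Next I would estimate the spatial terms, expanding $\nabla\weps = \jeps\nabla\ueps + \ueps\nabla\jeps$ everywhere. The diffusion term yields $\int_{\oe}\epsilon^2\jeps^2\deps\nabla\ueps\cdot\nabla\ueps\,dx$; since $D$ is symmetric positive definite, $\|\nabla\seps\|$ is bounded (Assumption \ref{AssumptionsAprioriSeps}) and $\jeps\ge c_0$, the tensor $\deps=\nabla\seps^{-1}D\nabla\seps^{-T}$ is uniformly elliptic, so this term bounds $c\,\epsilon^2\|\nabla\ueps\|_{L^2(\oe)}^2$ from below and provides the coercivity. Every remaining contribution carries either $\epsilon\|\nabla\jeps\|_{L^\infty}\le C$ (Assumption \ref{AssumptionBoundsJeps}), or $\|\veps\|_{L^\infty}\le C\epsilon$ (a consequence of the $\tfrac{1}{\epsilon}\|\partial_t\seps\|$-bound in Assumption \ref{AssumptionsAprioriSeps}), or the explicit factor $\epsilon$ multiplying $\qeps$ (bounded by Assumption \ref{AssumptionQeps}); the cross-terms $\ueps\nabla\jeps$ are exactly where $\nabla\jeps=O(\epsilon^{-1})$ is compensated by the available powers of $\epsilon$. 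Each is therefore of the form $C\epsilon\int_{\oe}|\ueps||\nabla\ueps| + C\int_{\oe}\ueps^2$, so by Young's inequality the $\epsilon^2\|\nabla\ueps\|^2$-parts are absorbed into the coercive term and the remainder is controlled by $\|\ueps\|_{L^2(\oe)}^2$.

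For the reaction terms I would first note that, by the growth and Lipschitz assumptions on $f$ and $g$ together with $f,g\in L^2((0,T)\times\R)$, the maps $z\mapsto f(t,z)$, $z\mapsto g(t,z)$ are bounded with $\|f(t,\cdot)\|_{L^\infty},\|g(t,\cdot)\|_{L^\infty}\in L^2(0,T)$ (a uniformly Lipschitz $L^2(\R)$-function is bounded). The bulk term $\int_{\oe}\jeps^2 f(\ueps)\ueps\,dx$ is then $\le C\big(h(t)+\|\ueps\|_{L^2(\oe)}^2\big)$ with $h\in L^1(0,T)$. The boundary term $\epsilon\int_{\ge}\jeps^2 g(\ueps)\ueps\,d\sigma$ is treated via the scaled trace inequality $\epsilon\|v\|_{L^2(\ge)}^2\le C\|v\|_{\he}^2$ and the fact that $\epsilon|\ge|=O(1)$: using boundedness of $g$ and Cauchy--Schwarz on $\ge$ one gets $\epsilon\int_{\ge}|\ueps|\,d\sigma \le C\|\ueps\|_{\he}$, hence this contribution is $\le \tfrac{c}{8}\epsilon^2\|\nabla\ueps\|^2 + C\big(\|g(t,\cdot)\|_\infty^2+\|\ueps\|_{L^2(\oe)}^2\big)$, again absorbable. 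Collecting everything gives $\|\weps(\tau)\|_{L^2(\oe)}^2 + \tfrac{c}{2}\epsilon^2\int_0^\tau\|\nabla\ueps\|^2 \le C + C\int_0^\tau\|\weps\|_{L^2(\oe)}^2$, and Gronwall's lemma (passing between $\weps$ and $\ueps$ through $c_0\le\jeps\le C_0$) yields \eqref{AprioriEstimateUeps}.

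Finally, \eqref{AprioriEstimateTimeDerivativeJepsUeps} follows in two steps. The bound $\|\jeps\ueps\|_{L^2((0,T),\he)}\le C$ is immediate from \eqref{AprioriEstimateUeps}, since $\epsilon\|\nabla\weps\|\le \epsilon\|\nabla\jeps\|_\infty\|\ueps\| + \|\jeps\|_\infty\,\epsilon\|\nabla\ueps\| \le C\|\ueps\|_{\he}$. For the time-derivative I would test \eqref{VarEquaMicProblemFixedDomain} with an arbitrary $\phi$, $\|\phi\|_{\he}\le 1$, and read off $\langle\partial_t\weps(t),\phi\rangle$ as minus the fluxes plus the reactions; each term is bounded as above but keeping one factor $\epsilon\|\nabla\phi\|\le\|\phi\|_{\he}\le 1$, which is precisely where the use of the $\he'$-norm (in place of $H^1(\oe)'$) is essential, giving $\|\partial_t\weps(t)\|_{\he'}\le C\big(\epsilon\|\nabla\ueps(t)\| + \|\ueps(t)\|_{L^2} + \|f(t,\cdot)\|_\infty + \|g(t,\cdot)\|_\infty\big)$; squaring, integrating in $t$ and invoking \eqref{AprioriEstimateUeps} closes the estimate. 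I expect the main obstacle to be exactly the justification of the clean time-identity for $\langle\partial_t(\jeps\ueps),\weps\rangle$ and the careful bookkeeping showing that every non-diffusive term carries enough powers of $\epsilon$ to be absorbed---in particular taming the $O(\epsilon^{-1})$ size of $\nabla\jeps$ in the $\ueps\nabla\jeps$ cross-terms; at the technical level one would first derive the energy identity on the Galerkin level, where all quantities are smooth, and then pass to the limit.
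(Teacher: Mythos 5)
Your proof is correct, but it diverges from the paper's at the one genuinely delicate point. The paper tests \eqref{VarEquaMicProblemFixedDomain} with $\ueps$ itself, which produces $\langle\partial_t(\jeps\ueps),\ueps\rangle_{\oe} = \tfrac12\tfrac{d}{dt}\|\sqrt{\jeps}\,\ueps\|_{L^2(\oe)}^2 + \tfrac12\int_{\oe}\partial_t\jeps\,\ueps^2\,dx$; the second term is exactly the one you identify as uncontrollable from the mere $\he'$-bound on $\partial_t\jeps$ in Assumption \ref{AssumptionBoundsJeps}, and the paper removes it by integrating the advective term by parts using the identity $\partial_t\jeps=\nabla\cdot(\jeps\veps)$, so that the two occurrences of $\tfrac12\int_{\oe}\partial_t\jeps\,\ueps^2\,dx$ cancel, at the price of a boundary term $\tfrac12\int_{\partial\oe}\jeps\veps\cdot\nu\,\ueps^2\,d\sigma$ absorbed via the trace inequality \eqref{TraceInequality}. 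You instead test with $\jeps\ueps$, so the problematic term never appears, and you pay with the cross terms $\ueps\nabla\jeps$ coming from $\nabla(\jeps\ueps)$; your bookkeeping that these are tamed by $\epsilon\|\nabla\jeps\|_{L^\infty}\le C$ (Assumption \ref{AssumptionBoundsJeps}) together with $\|\veps\|_{L^\infty}\le C\epsilon$ (from \ref{AssumptionsAprioriSeps}) and the explicit factor $\epsilon$ in front of $\qeps$ is sound, as is the Lions--Magenes identity $\langle\partial_t(\jeps\ueps),\jeps\ueps\rangle_{\oe}=\tfrac12\tfrac{d}{dt}\|\jeps\ueps\|_{L^2(\oe)}^2$, since $\jeps\ueps\in L^2((0,T),H^1(\oe))$ and $\partial_t(\jeps\ueps)\in L^2((0,T),H^1(\oe)')$ by the definition of a weak solution. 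Both routes work under the stated hypotheses: the paper's cancellation is coordinate-free and never expands $\nabla(\jeps\ueps)$, while yours dispenses with the Reynolds/Jacobi identity and the extra boundary term and yields $\jeps\ueps\in C([0,T],L^2(\oe))$ as a by-product. The remaining ingredients (uniform ellipticity of $\deps$, the trace inequality for the surface reaction, Gronwall, equivalence of $\|\jeps\ueps\|$ and $\|\ueps\|$ via $c_0\le\jeps\le C_0$, and the duality argument with $\|\phi\|_{\he}\le1$ for the $\he'$-bound on $\partial_t(\jeps\ueps)$) coincide with the paper's.
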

\begin{proof}
We choose $\ueps $ as a test function in equation $\eqref{VarEquaMicProblemFixedDomain}$. The positivity  of $D$ and the properties of $\seps$ imply the coercivity of $\deps$. Using $c_0\le \jeps$ and the formulas (remember that $\partial_t \jeps$ is a $C^0$-function)
\begin{align*}
\big\langle \partial_t (\jeps \ueps), \ueps \big\rangle_{\oe} = \frac{1}{2 } \frac{d}{dt} \|\sqrt{\jeps} \ueps\|^2_{L^2(\oe)} + \frac12 \int_{\oe} \partial_t \jeps \ueps^2 dx,
\end{align*}
and (since $\partial_t \jeps = \nabla \cdot (\jeps \veps)$, what is an easy consequence of \cite[p. 117]{MarsdenHughes})
\begin{align*}
\int_{\oe} \jeps \ueps \veps \cdot \nabla \ueps dx = -\frac12 \int_{\oe} \partial_t \jeps \ueps^2 dx + \frac12 \int_{\partial \oe} \jeps \veps \cdot \nu \ueps^2 d\sigma,
\end{align*}
we obtain for a constant $d>0$
\begin{align*}
\frac12 \frac{d}{dt} &\|\sqrt{\jeps} \ueps \|^2_{L^2(\oe)} +  d \epsilon^2 \|\nabla \ueps \|^2_{L^2(\oe)}
\\
\le& -   \frac12 \int_{\oe} \partial_t \jeps \ueps^2 dx + \epsilon \int_{\oe} \jeps \ueps \qeps \cdot \nabla \ueps dx - \int_{\oe} \jeps \ueps \veps \cdot \nabla \ueps dx 
\\
 &+ \int_{\oe} \jeps f(\ueps) \ueps dx + \epsilon \int_{\ge} \jeps g(\ueps ) \ueps d\sigma  
\\
=&  \epsilon \int_{\oe} \jeps \ueps \qeps \cdot \nabla \ueps dx + \int_{\oe} \jeps f(\ueps) \ueps dx 
\\
&+ \epsilon \int_{\ge} \jeps g(\ueps) \ueps d\sigma - \frac12 \int_{\partial \oe} \jeps \veps \cdot \nu \ueps^2 d\sigma =:\sum_{j=1}^4
 A_{\epsilon}^{(j)}.
\end{align*}
We continue with the terms on the right-hand side. 
For any $\theta >0$, a $C(\theta)>0$ exists such that for the first term one has 
\begin{align*}
A_{\epsilon}^{(1)} \le C \epsilon \|\jeps \qeps\|_{L^{\infty}(\oe)} \|\ueps\|_{L^2(\oe)} \|\nabla \ueps \|_{L^2(\oe)} \le C(\theta)\|\ueps\|^2_{L^2(\oe)} + \theta\epsilon^2 \|\nabla \ueps\|^2_{L^2(\oe)} .
\end{align*}
Concerning the nonlinear terms, we only consider the boundary integral. All others can be treated in a similar way. We use the trace inequality for periodically perforated domains, stating that for every $\theta>0$, there exists $C(\theta)>0$, such that for all $\weps \in H^1(\oe)$ one has 
\begin{align}\label{TraceInequality}
\epsilon\|\weps\|_{L^2(\partial \oe )}^2 \le C(\theta)\|\weps\|^2_{L^2(\oe)} + \theta \epsilon ^2 \|\nabla \ueps\|^2_{L^2(\oe)}.
\end{align}
Together with the Lipschitz continuity of $g$, this implies that 
\begin{align*}
A_{\epsilon}^{(3)} &\le C\epsilon \int_{\ge} 1 + |\ueps|^2 d\sigma \le C\big(1 + \epsilon\|\ueps\|_{L^2(\ge)}^2\big)
\\
&\le C(\theta) \big(1 + \|\ueps\|^2_{L^2(\oe)} \big) + \theta \epsilon^2 \|\nabla \ueps\|^2_{L^2(\oe)} .
\end{align*}
For $A_{\epsilon}^{(4)}$, we use similar arguments as above and Assumption \ref{AssumptionsAprioriSeps} to obtain 
\begin{align*}
A_{\epsilon}^{(4)} &\le C \epsilon \|\ueps\|_{L^2(\partial \oe)}^2 \le C(\theta) \|\ueps\|^2_{L^2(\oe)} + \theta \epsilon^2 \|\nabla \ueps\|^2_{L^2(\oe)}.
\end{align*}

Choosing $\theta $ small enough, the terms on the right including the gradients can be absorbed by similar terms on the left. Using the Gronwall-inequality and the $\he$-norm induced by the inner product $\eqref{SPhe}$, we obtain inequality $\eqref{AprioriEstimateUeps}$. 

The $L^2((0,T),\he)$-estimate for $\jeps \ueps$ follows directly from $\eqref{AprioriEstimateUeps}$ and the properties of $\jeps$. To prove the inequality for the time-derivative $\partial_t (\jeps \ueps)$, we choose $\phi \in \he$ with $\|\phi\|_{\he} \le 1$ as a test function in  $\eqref{VarEquaMicProblemFixedDomain}$, and apply similar arguments as above.
\end{proof}

We emphasize that in the proof above we used the $C^1$-regularity of $\jeps$, but we have not assumed a uniform bound with respect to $\epsilon$ for the $C^0$-norm of the time-derivative $\partial_t \jeps$. We overcome this problem by using the equality $\partial_t \jeps = \nabla \cdot (\jeps \veps)$.

\begin{remark}\label{BemerkungAprioriEstimaetZeitableitungUeps}
Due to the $C^1$-regularity of $\jeps$ and the product rule we have 
\begin{align*}
\partial_t \ueps = \jeps^{-1} \partial_t (\jeps \ueps) - \jeps^{-1} \ueps \partial_t \jeps \in L^2((0,T),\he') \hookrightarrow L^2((0,T),H^1(\oe)').
\end{align*}
However, we only have a uniform bound for the $L^2((0,T),\he')$-norm of $\partial_t \jeps$, see \ref{AssumptionBoundsJeps}. Therefore, we obtain no bounds for $\partial_t \ueps$ in the  $L^2((0,T),\he')$ norm (or in $L^2((0,T),H^1(\oe)')$) that are uniform in $\epsilon$. Under the additional assumption
\begin{align*}
\Vert \partial_t \jeps \Vert_{L^{\infty}((0,T)\times \oe)} \le C,
\end{align*}
from the results above one also gets 
\begin{align*}
\Vert \partial_t \ueps \Vert_{L^2((0,T),\he')} \le C.
\end{align*}
\end{remark}

\subsection{Estimates for the shifted functions}

To prove strong compactness results which are needed to pass to the limit in the nonlinear terms, we need additional a priori estimates for the difference between shifted functions and the function itself. 
For $h>0$ let us define the set $\Omega^h:= \{x\in \Omega \, : \, \mathrm{dist}(x,\partial \Omega) > h \}$. Further we define 
\begin{align*}
K_{\epsilon}^h&:= \{k\in \Z^n\, : \, \epsilon(Y + k) \subset \Omega^h\},
\\
\oeh&:= \mathrm{int}\left(\bigcup_{k \in K_{\epsilon}^h} \epsilon\big(\overline{Y^{\ast}}+ k \big)\right),
\\
\geh&:= \mathrm{int} \left(\bigcup_{k \in K_{\epsilon}^h }\epsilon \big(\overline{\Gamma} + k \big) \right).
\end{align*}
Then, for $\ell \in \Z^n $ with $|\ell\epsilon|< h$ we define for an arbitrary function $\veps : \oe \rightarrow \R$ the shifted function
\begin{align*}
\veps^\ell(x):= \veps(x + \ell\epsilon),
\end{align*} 
and the difference between the shifted function and the function itself
\begin{align*}
\delta \veps(x):= \veps^\ell(x) - \veps(x) = \veps (x+\ell\epsilon) - \veps(x).
\end{align*}
In the following, we want to estimate $\delta (\jeps \ueps)$. Therefore, we need the variational equation for the shifted function $\ueps^\ell$, respectively $\jeps^\ell \ueps^\ell$. Due to the low regularity for the time-derivative $\partial_t (\jeps \ueps) \in L^2((0,T),H^1(\oe)')$, it is not obvious how to obtain the time-derivative of $(\jeps \ueps)^\ell$, since this function is only defined on $\oeh$. Therefore, we argue in the following way: First of all, we define the space 
\begin{align*}
\hoeh:= \left\{ \phi_{\epsilon} \in H^1(\oeh)\, : \, \phi_{\epsilon} = 0 \mbox{ on } \partial \oeh \setminus \geh \right\}
\end{align*}
of Sobolev functions with zero trace on the outer boundary of $\oeh$.
For $\veps \in H^1((0,T),H^1(\oe)')$ it holds that the restriction to $\oeh$  fulfills $\veps \in H^1((0,T),\hoeh')$ with 
\begin{align*}
\langle \partial_t \veps , \phi \rangle_{\hoeh' , \hoeh} = \langle \partial_t \veps , \tphi \rangle_{H^1(\oe)',H^1(\oe)},
\end{align*}
for all $\phi \in \hoeh$ and $\tphi$ denotes the zero extension of $\phi$ to $\oe$. Further, we have
\begin{align*}
\langle \partial_t \veps^\ell , \phi \rangle_{\hoeh',\hoeh} = \langle \partial_t \veps , \tphi^{-l}\rangle_{H^1(\oe)',H^1(\oe)}.
\end{align*}
Finally, the space $\he^h$ is defined in the same way as $\he$, with $\oeh$ replacing $\oe$.

\begin{lemma}\label{ApriorEstimatesShifts}
Let $\ueps$ be the sequence of weak solutions of Problem P$_T$ (stated in $\eqref{MicProblemFixedDomain}$), $0 < h \ll 1$ and $\ell\in \Z^n$ such that $|\ell\epsilon|<h$. Then it holds
\begin{align*}
\Vert \delta(\jeps \ueps)\Vert_{L^2((0,T),\he^{2h})}
\le& C \|\delta (\jeps(0)\ueps^0)\|_{L^2(\oeh)} + C|\ell\epsilon| + C\sqrt{\epsilon},
\end{align*}
with a constant $C > 0$ which may depend on $h$.
\end{lemma}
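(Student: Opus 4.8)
The plan is to work throughout with the product $\weps := \jeps\ueps$, which by Lemma~\ref{AprioriEstimates} is the quantity carrying $\epsilon$-uniform control in time. First I would recast the weak equation \eqref{VarEquaMicProblemFixedDomain} entirely in terms of $\weps$ and $\nabla\weps$: since $\nabla\ueps = \jeps^{-1}\nabla\weps - \jeps^{-2}\weps\nabla\jeps$, the diffusive flux becomes
\begin{align*}
\epsilon^2\jeps\deps\nabla\ueps = \epsilon^2\deps\nabla\weps - \epsilon^2\jeps^{-1}\deps\weps\nabla\jeps,
\end{align*}
so the leading term carries the coercive tensor $\deps$ acting directly on $\nabla\weps$, while the corrector is of order $\epsilon$ because $\epsilon\|\nabla\jeps\|_{L^\infty}\le C$ by Assumption~\ref{AssumptionBoundsJeps}. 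Likewise $-\epsilon\jeps\qeps\ueps = -\epsilon\qeps\weps$ and $\jeps\veps\ueps = \veps\weps$, with $\|\veps\|_{C^0}\le C\epsilon$ by Assumption~\ref{AssumptionsAprioriSeps}. This yields an identity $\langle\partial_t\weps,\phi\rangle_{\oe} + \int_{\oe}\mathcal{F}_\epsilon(\weps,\nabla\weps)\cdot\nabla\phi\,dx = \int_{\oe}\jeps f(\ueps)\phi\,dx + \epsilon\int_{\ge}\jeps g(\ueps)\phi\,d\sigma$, valid for all $\phi\in H^1(\oe)$ and a.e.\ $t$.

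Since $\ell\epsilon$ is an integer multiple of the period and all coefficients are built from the periodic geometry, the shifted functions satisfy the same identity on $\oeh$ for every $\phi\in\hoeh$, the time-derivative being interpreted through the restriction identities $\langle\partial_t\weps^\ell,\phi\rangle_{\hoeh',\hoeh} = \langle\partial_t\weps,\tphi^{-\ell}\rangle_{H^1(\oe)',H^1(\oe)}$ stated before the lemma. Subtracting the shifted and unshifted identities produces an equation for $\delta\weps$ on $\oeh$ whose right-hand side collects the coefficient increments $\delta\jeps$, $\delta\nabla\jeps$, $\delta\deps$, $\delta\qeps$, $\delta\veps$ (controlled by the shift Assumptions~\ref{AssumptionEstimateShiftGradientJeps}--\ref{AssumptionEstimateShiftVelocitySurface} and~\ref{AssumptionQeps} together with the regularity of $\seps$) and the nonlinear increments $f(\ueps^\ell)-f(\ueps)$ and $g(\ueps^\ell)-g(\ueps)$.

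The core is a localized energy estimate. I fix a cut-off $\eta\in C_0^\infty(\oeh)$ with $\eta\equiv 1$ on $\oeth$, $0\le\eta\le 1$ and $|\nabla\eta|\le C(h)$, and test the difference equation with $\eta^2\delta\weps\in\hoeh$. Writing $\eta\partial_t\delta\weps = \partial_t(\eta\delta\weps)$ and using that $\eta\delta\weps\in L^2((0,T),\hoeh)$ with dual time-derivative, the time-derivative term becomes $\tfrac12\tfrac{d}{dt}\|\eta\delta\weps\|_{L^2(\oeh)}^2$; no contribution from $\partial_t\jeps$ appears, since the increment is taken of the whole product $\weps$. The principal part of the flux gives the coercive term $d\epsilon^2\|\eta\nabla\delta\weps\|_{L^2(\oeh)}^2$. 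All remaining bulk contributions are estimated by Young's inequality, the shift assumptions above, the Lipschitz continuity of $f$ and $g$, and the \textit{a priori} bounds of Lemma~\ref{AprioriEstimates}; here one converts increments of $\ueps$ into increments of $\weps$ by $\delta\ueps = (\jeps^\ell)^{-1}\big(\delta\weps - (\delta\jeps)\ueps\big)$, so that $|\delta\ueps|\le C(|\delta\weps| + |\ell\epsilon|\,|\ueps|)$. The terms carrying a coefficient increment contribute $C|\ell\epsilon|$, the gradient terms (including the $\nabla\eta$ cross terms) are absorbed into the coercive term by choosing the Young parameters small, and the terms proportional to $\|\delta\weps\|_{L^2}^2$ are kept for Gronwall.

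The main obstacle, and the source of the remaining $C\sqrt{\epsilon}$, are the surface integrals. After splitting $\epsilon\int_{\geh}[\jeps^\ell g(\ueps^\ell) - \jeps g(\ueps)]\eta^2\delta\weps\,d\sigma$ into a Lipschitz part and a coefficient part, each surface factor is handled by the trace inequality \eqref{TraceInequality}, which converts $\epsilon\|\delta\weps\|_{L^2(\geh)}^2$ into $C\|\delta\weps\|_{L^2(\oeh)}^2 + \theta\epsilon^2\|\nabla\delta\weps\|_{L^2(\oeh)}^2$. The residual $\sqrt{\epsilon}$ is expected to arise because, after Cauchy--Schwarz over the $\epsilon$-periodic surface $\geh$ whose measure scales like $\epsilon^{-1}$, the factor $\epsilon\,|\geh|^{1/2}$ is of order $\epsilon^{1/2}$, and multiplied by the trace-controlled quantity $\epsilon^{1/2}\|\delta\weps\|_{L^2(\geh)}\le C$ this leaves a contribution of order $\sqrt{\epsilon}$. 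Collecting everything, integrating over $(0,t)$, absorbing the gradient terms and applying Gronwall's inequality — with $\|\eta\delta\weps(0)\|_{L^2(\oeh)}\le\|\delta(\jeps(0)\ueps^0)\|_{L^2(\oeh)}$ — yields $\sup_t\|\eta\delta\weps\|_{L^2(\oeh)}^2 + \epsilon^2\int_0^T\|\eta\nabla\delta\weps\|_{L^2(\oeh)}^2\,dt \le C\big(\|\delta(\jeps(0)\ueps^0)\|_{L^2(\oeh)}^2 + |\ell\epsilon|^2 + \epsilon\big)$. Since $\eta\equiv 1$ on $\oeth$, the left-hand side dominates $\|\delta\weps\|_{L^2((0,T),\he^{2h})}^2$, and taking square roots gives the claim. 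The two delicate points are keeping the time-derivative pairing well-defined under the shift (handled by the restriction identities) and matching every surface contribution to a trace-controllable quantity so that only the stated $|\ell\epsilon|$ and $\sqrt{\epsilon}$ residuals survive.
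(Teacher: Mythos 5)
Your proposal follows essentially the same route as the paper: rewrite the flux via the product rule so that $\deps$ acts on $\nabla\weps$ with an $O(\epsilon^{-1})$ corrector from $\nabla\jeps$, form the difference equation for $\delta\weps$ on $\oeh$ using the restriction identities for the shifted time-derivative, test with $\eta^2\delta\weps$, and close with the shift assumptions, the trace inequality and Gronwall. The only inaccuracy is attributing the $C\sqrt{\epsilon}$ residual solely to the surface integrals: in the paper it also arises from bulk terms, namely the cut-off cross terms paired with the $O(\epsilon^{-1})$-sized coefficients $\tfrac{\deps}{\jeps}\nabla\jeps$, $\epsilon\qeps$ and $\veps$ (the terms $A_{\epsilon}^{(2)}$, $A_{\epsilon}^{(5)}$, $A_{\epsilon}^{(7)}$), which carry only a single factor of $\eta$ and are therefore bounded by $C\epsilon\,\|\delta\weps\|^2_{L^2((0,t)\times\oeh)}\le C\epsilon$ via the global \textit{a priori} estimate rather than being absorbed into the coercive term or sent to Gronwall.
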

\begin{proof}
For the ease of writing we use the notation  
\begin{align*}
\weps(t,x):= \jeps(t,x) \ueps(t,x).
\end{align*}
Using the product rule gives 
\begin{align*}
\jeps \deps \nabla \ueps = \deps \nabla \weps - \weps \frac{\deps}{\jeps} \nabla \jeps.
\end{align*}
An elemental calculation gives us the following variational equation for all  $\phi \in \hoeh$ and almost everywhere in $(0,T)$
\begin{align}
\begin{aligned}
\label{AuxiliaryEquationEstimatesShifts}
\big\langle& \partial_t \delta \weps , \phi \big\rangle_{\hoeh',\hoeh} + \epsilon^2 \int_{\oeh} \deps \nabla \delta \weps \cdot \nabla \phi dx
\\
=& \langle \partial_t \weps^\ell , \phi \rangle_{\hoeh',\hoeh} + \epsilon^2 \int_{\oeh} \deps^\ell \nabla \weps^\ell \cdot \nabla \phi dx 
 - \epsilon^2 \int_{\oeh} \delta \deps \nabla \weps^\ell \cdot \nabla \phi dx 
\\
&- \langle \partial_t \weps , \phi \rangle_{\hoeh',\hoeh} - \epsilon^2 \int_{\oeh} \deps \nabla \weps \cdot \nabla \phi dx 
\\
=& \langle \partial_t \weps^\ell , \phi \rangle_{\hoeh',\hoeh} + \epsilon^2 \int_{\oeh} \jeps^\ell \deps^\ell \nabla \ueps^\ell \cdot \nabla \phi dx 
\\
&- \langle \partial_t \weps , \phi \rangle_{\hoeh',\hoeh} - \epsilon^2 \int_{\oeh} \jeps \deps \nabla \ueps \cdot \nabla\phi dx 
\\
&- \epsilon^2 \int_{\oeh} \delta \deps \nabla \weps^\ell \cdot \nabla \phi dx + \epsilon^2 \int_{\oeh} \delta\left(\weps \frac{\deps}{\jeps} \nabla \jeps \right) \cdot \nabla \phi dx
\\
=& - \epsilon^2 \int_{\oeh} \delta \deps \nabla \weps^\ell \cdot \nabla \phi dx + \epsilon^2 \int_{\oeh} \delta \weps \frac{\deps}{\jeps} \nabla \jeps \cdot \nabla \phi dx 
\\
&+ \epsilon^2 \int_{\oeh} \delta \left(\frac{\deps}{\jeps} \nabla \jeps \right)  \weps^\ell \cdot \nabla \phi dx + \epsilon \int_{\oeh} \weps^\ell \delta \qeps \cdot \nabla \phi dx 
\\
&+ \epsilon \int_{\oeh} \delta \weps \qeps \cdot \nabla \phi dx -  \int_{\oeh} \weps^\ell \delta \veps \cdot \nabla \phi dx 
\\
&- \int_{\oeh} \delta \weps \veps \cdot \nabla \phi dx + \int_{\oeh} \big(\jeps^\ell f(\ueps^\ell) - \jeps f(\ueps) \big) \phi dx 
\\
&+ \epsilon \int_{\geh} \big(\jeps^\ell g(\ueps^\ell) - \jeps g(\ueps) \big) \phi d\sigma
 =: \sum_{j=1}^{9} A_{\epsilon}^{(j)}.
\end{aligned}
\end{align}
Now, we choose $\phi = \eta^2 \delta \weps $, where $\eta \in C^{\infty}\big(\overline{\oeh}\big)$ is a cut-off function with $0 \le \eta \le 1$, $\eta =1 $ in $\overline{\oeth}$ and $\eta = 0$ in $\oe\setminus \oeh$.
This implies 
\begin{align*}
\big\langle \partial_t \delta \weps , \eta^2 \delta \weps  \big\rangle_{\hoeh',\hoeh} 
&= \frac12 \frac{d}{dt} \|\eta \delta \weps \|^2_{L^2(\oe)}
\end{align*}
and  the coercivity of $\deps$ implies the existence of a constant $d>0$ such that (we extend every function by zero outside $\oe$)
\begin{align*}
\epsilon^2 \int_{\oeh} \deps &\nabla \delta \weps \cdot \nabla \big(\eta^2 \delta \weps\big) dx 
\\
&= \epsilon^2 \int_{\oe} \deps \eta^2 \nabla \delta \weps \cdot \nabla \delta \weps dx + 2\epsilon^2 \int_{\oe} \eta \delta \weps \deps \nabla \delta \weps \cdot \nabla \eta dx 
\\
&\geq \epsilon^2 d \|\eta \nabla \delta \weps \|^2_{L^2(\oe)} + 2 \epsilon^2 \int_{\oe} \eta \delta \weps \deps \nabla \delta \weps \cdot \nabla \eta dx.
\end{align*}
For the last term we use Lemma \ref{AprioriEstimates} to obtain for every $\theta >0$ a constant $C(\theta)>0$ such that
\begin{align*}
\epsilon^2 \left| \int_{\oe} \eta \delta \weps \deps \nabla \delta \weps \cdot \nabla \eta dx \right| &\le C(\theta) \epsilon^2 \|\delta \weps \|^2_{L^2(\oeh)} + \theta \epsilon^2 \|\eta \nabla \delta  \weps \|^2_{L^2(\oe)} 
\\
&\le C(\theta) \epsilon^2 + \theta \epsilon^2 \|\eta \nabla \delta \weps \|^2_{L^2(\oe)}.
\end{align*}
Now we estimate the terms $A_{\epsilon}^{(j)}$. From the assumptions on $\seps$  we obtain $\|\delta \deps \|_{L^{\infty}((0,T)\times \oeh)} \le C|\ell\epsilon|$. This, together with the a priori estimates in Lemma \ref{AprioriEstimates} and after integration with respect to time, implies that for arbitrary $\theta >0$, a constant $C(\theta)>0$ exists s.t.
\begin{align*}
\int_0^t  A_{\epsilon}^{(1)} &ds =
-\epsilon^2 \int_0^t\int_{\oeh} \eta^2 \delta \deps \nabla \weps^\ell \cdot \nabla \delta \weps  + 2 \eta \delta \deps \delta \weps \nabla \weps^\ell \cdot \nabla \eta dxds
\\
\le& C\epsilon^2 \| \delta \deps \|_{L^{\infty}((0,t)\times \oeh)} \|\nabla \weps^\ell \|_{L^2((0,t)\times \oeh)} \|\eta \nabla \delta \weps\|_{L^2((0,t)\times \oeh)}
\\
&+ C \epsilon^2 \|\eta \delta \weps\|_{L^2((0,t)\times \oeh)} \|\delta \deps \|_{L^{\infty}((0,t)\times \oeh)} \|\nabla \weps^\ell\|_{L^2((0,t)\times \oeh)}
\\
\le& C(\theta) |\ell\epsilon|^2   + \theta \epsilon^2 \|\eta \nabla \delta \weps\|_{L^2((0,t)\times \oeh)}^2 + C \|\eta \delta \weps\|^2_{L^2((0,t)\times \oeh)} + C\epsilon^2|\ell\epsilon|^2.
\end{align*}
For the term $A_{\epsilon}^{(2)}$ we use Assumptions (A2) and (A3) to conclude that  
\begin{align*}
\left\|\frac{\deps}{\jeps} \nabla \jeps\right\|_{L^{\infty}((0,T)\times \oeh)} \le \frac{C}{\epsilon}.  
\end{align*}
Using the above gives 
\begin{align*}
\int_0^t A_{\epsilon}^{(2)} & ds =
2\epsilon^2 \int_0^t\int_{\oeh} \delta \weps^2 \frac{\deps}{\jeps}\nabla \jeps \cdot \nabla \eta \eta  +  \delta \weps \frac{\deps}{\jeps}\nabla \jeps \cdot \nabla \delta \weps \eta^2 dxds
\\
&\le C \epsilon \|\delta \weps \|^2_{L^2((0,t)\times \oeh)} + C \epsilon \|\eta \delta \weps \|_{L^2((0,t)\times \oeh)} \|\eta \nabla \delta \weps \|_{L^2((0,t)\times \oeh)}
\\
&\le C\epsilon + C(\theta) \|\eta \delta \weps \|^2_{L^2((0,t)\times \oeh)} + \theta \epsilon^2 \|\eta \nabla \delta \weps \|^2_{L^2((0,t)\times \oeh)}. 
\end{align*}
For $A_{\epsilon}^{(3)}$, we use Assumption \ref{AssumptionEstimateShiftGradientJeps} to obtain
\begin{align*}
\left\|\delta \left(\frac{\deps}{\jeps}\nabla \jeps\right)\right\|_{L^{\infty}((0,T)\times \oeh)} \le C|l|.
\end{align*}
This implies that 
\begin{align*}
\int_0^t A_{\epsilon}^{(3)} ds \le& C |\ell\epsilon| \epsilon \| \weps^\ell \|_{L^2((0,t)\times \oeh)} \|\eta \nabla \delta \weps \|_{L^2((0,t)\times \oeh)} 
\\
&+ C|\ell\epsilon| \epsilon\|\eta \delta \weps \|_{L^2((0,t)\times \oeh)} \| \weps^\ell\|_{L^2((0,t)\times \oeh)}
\\
\le& C(\theta) |\ell\epsilon|^2 + \theta \epsilon^2 \|\eta \nabla \delta \weps \|_{L^2((0,t)\times \oeh)}^2 + C \|\eta \delta \weps \|^2_{L^2((0,t)\times \oeh)} + C|\ell\epsilon|^2.
\end{align*}
Using the assumptions on $\tqeps$, we obtain
\begin{align*}
\int_0^t A_{\epsilon}^{(4)} ds\le& C \epsilon \|\weps^\ell \|_{L^2((0,t)\times \oeh)} \|\delta \qeps \|_{L^{\infty}((0,t)\times \oeh)} \|\eta \nabla \weps \|_{L^2(\oeh)} 
\\
&+ C \epsilon \|\weps^\ell \|_{L^2((0,t)\times \oeh)}\|\eta \delta \weps \|_{L^2((0,t)\times \oeh)} \|\delta \qeps\|_{L^{\infty}((0,t)\times \oeh)}
\\
\le& C(\theta) |\ell\epsilon|^2 + \theta \epsilon^2 \|\eta \nabla \delta \weps \|^2_{L^2((0,t)\times \oeh)} + C\|\eta \delta \weps \|_{L^2((0,t)\times \oeh)}^2 + C \epsilon^2 |\ell\epsilon|^2.
\end{align*}
For the fifth term, employing similar arguments as above leads to 
\begin{align*}
\int_0^t A_{\epsilon}^{(5)} ds \le C(\theta) \|\eta \delta \weps\|^2_{L^2((0,t)\times \oeh)} + \theta \epsilon^2 \|\eta \nabla \delta \weps\|^2_{L^2((0,t)\times \oeh)}+ C\epsilon.
\end{align*}
For $A_{\epsilon}^{(6)}$, we use Assumption \ref{AssumptionEstimateShiftVelocitySurface} to obtain 
\begin{align*}
\int_0^t A_{\epsilon}^{(6)} ds \le& \|\delta \veps \|_{L^{\infty}((0,t)\times \oeh)} \|\weps^\ell\|_{L^2((0,t)\times \oeh)} \|\eta \nabla \delta \weps\|_{L^2((0,t)\times \oeh)} 
\\
&+ C\|\weps^\ell\|_{L^2((0,t)\times \oeh)} \|\eta \delta \weps\|_{L^2((0,t)\times \oeh)} \|\delta \veps\|_{L^{\infty}((0,t)\times \oeh)}
\\
\le& C(\theta) |\ell\epsilon|^2 + \theta \epsilon^2 \|\eta \nabla \delta \weps \|^2_{L^2((0,t)\times \oeh)} + C \|\eta \delta \weps \|_{L^2((0,t)\times \oeh)}^2 + C \epsilon^2 |\ell\epsilon|^2, 
\end{align*}
while for $A_{\epsilon}^{(7)}$ we get
\begin{align*}
\int_0^t A_{\epsilon}^{(7)} ds \le C(\theta) \|\eta \delta \weps \|_{L^2((0,t)\times \oeh)}^2 + \theta \epsilon^2 \|\eta \nabla \delta \weps \|^2_{L^2((0,t)\times \oeh)} + C\epsilon.
\end{align*}
From the nonlinear terms, we only consider the boundary term $A_{\epsilon}^{(9)}$, since the other one can be treated in a similar way. One has 
\begin{align*}
\int_0^t A_{\epsilon}^{(9)} ds =& \epsilon \int_0^t\int_{\geh} \delta \jeps g(\ueps^\ell) \eta^2 \delta \weps d\sigma ds + \epsilon \int_0^t \int_{\geh} \jeps \big(g(\ueps^\ell) - g(\ueps) \big) \eta^2 \delta \weps d\sigma ds
\\
=&: B_{\epsilon}^{(1)} + B_{\epsilon}^{(2)}
\end{align*}
Using Assumption \ref{AssumptionEstimateShiftGradientJeps}, Lemma \ref{AprioriEstimates}, and the trace inequality $\eqref{TraceInequality}$, for $B_{\epsilon}^{(1)}$ we get  
\begin{align*}
B_{\epsilon}^1 \le&
C \epsilon^{\frac12} |\ell\epsilon| \|\eta \delta \weps\|_{L^2((0,t)\times \geh)} + C \epsilon |\ell\epsilon| \|\ueps\|_{L^2((0,t)\times \geh)} \|\eta \delta \weps \|_{L^2((0,t)\times \geh)}
\\
\le& C \epsilon^{\frac12} |\ell\epsilon| \|\eta \delta \weps\|_{L^2((0,t)\times \geh)}
\\
\le& C(\theta)  |\ell\epsilon| \|\eta \delta \weps \|_{L^2((0,t)\times \oeh)} + \theta  \epsilon |\ell\epsilon| \|\nabla (\eta \delta \weps )\|_{L^2((0,t)\times \oeh)}
\\
\le& C(\theta) \| \eta \delta \weps\|_{L^2((0,t)\times \oeh)}^2+ C(\theta)|\ell\epsilon|^2 + C\epsilon |\ell\epsilon| + \theta \epsilon^2 \|\eta \nabla \delta \weps\|_{L^2((0,t)\times \oeh)}^2.
\end{align*}
Using the positivity of $\jeps$ and similar arguments as for $B_{\epsilon}^{(1)}$ gives 
\begin{align*}
B_{\epsilon}^{(2)} \le& C \epsilon \int_0^t \int_{\geh} |\jeps \ueps^\ell - \jeps \ueps | \eta^2 |\delta \weps | d\sigma ds
\\
\le& C\epsilon \int_0^t \int_{\geh} |\delta \jeps| |\ueps^\ell| \eta^2 |\delta \weps| + \eta^2|\delta \weps|^2 d\sigma ds
\\
\le& C|\ell\epsilon| \epsilon \|\ueps^\ell\|_{L^2((0,t)\times \geh)} \|\eta \delta \weps \|_{L^2((0,t)\times \geh)} + C\epsilon \|\eta \delta \weps\|^2_{L^2((0,t)\times \geh)} 
\\
\le& C(\theta) \|\eta \delta \weps\|^2_{L^2((0,t)\times \oeh)} + C(\theta)|\ell\epsilon|^2 + C\epsilon |\ell\epsilon| + \theta \epsilon^2 \|\eta \nabla \delta \weps\|_{L^2((0,t)\times \oeh)} + C\epsilon^2.
\end{align*}
Integrating $\eqref{AuxiliaryEquationEstimatesShifts}$ with respect to time, applying the above estimates and the Gronwall-inequality, after choosing $\theta$ small enough, gives the desired result.
\end{proof}

\section{Two-scale compactness results}
\label{StrongTwoScaleResult}

In this section  we prove a general strong two-scale compactness result for an arbitrary sequence based on \textit{a priori} estimates and estimates for the differences of the shifts. We make use of the unfolding operator in perforated domains, see \cite{CioranescuDamlamianDonatoGrisoZakiUnfolding}. First of all, let us repeat the definition of two-scale convergence, which was introduced in \cite{Nguetseng} and \cite{Allaire_TwoScaleKonvergenz}. \\
\textbf{Notation:} To respect the notations used in early papers wherein the following concepts have been introduced, in this Section $\ueps$ and $\veps$ will denote arbitrary functions, and are not related to the microscopic model discussed here.

\begin{definition}\label{DefinitionTSConvergence}
A sequence $\ueps \in  L^p((0,T)\times \Omega)$ for $p\in [1,\infty)$ is said to converge  in the two-scale sense to the limit function $u_0\in L^p((0,T)\times  \Omega \times Y)$, if for every $\phi \in L^{p'}((0,T)\times \Omega, C_{\per}(\overline{Y}))$  the following relation holds
\begin{align*}
\lim_{\epsilon\to 0}\int_0^T \int_{\Omega}u_{\epsilon}(t,x)\phi\left(t,x,\frac{x}{\epsilon}\right)dxdt = \int_0^T\int_{\Omega}\int_Y u_0(t,x,y)\phi(t,x,y)dydxdt .
\end{align*}
A  two-scale convergent sequence $u_{\epsilon}$ convergences strongly in the two-scale sense to $u_0$, if  
\begin{align*}
\lim_{\epsilon\to 0}\|u_{\epsilon}\|_{L^p((0,T)\times \Omega)} =\|u_0\|_{L^p((0,T)\times \Omega  \times Y)} .
\end{align*}
\end{definition} 

\begin{remark}\mbox{}
\begin{enumerate}[label = (\roman*)]
\item By replacing $Y$ with $Y^{\ast}$ and $\Omega$ with $\oe$, the Definition \ref{DefinitionTSConvergence} can be easily extended to sequences in $L^p((0,T)\times \oe)$. In both cases we will use the same notation for the two-scale convergence.
\item  The two scale convergence introduced above should actually be referred to as "weak two scale convergence". However, in accordance with the definition in \cite{Allaire_TwoScaleKonvergenz} we neglect the word "weak" and only use "strong" to highlight the "strong two-scale convergence".
\end{enumerate}

\end{remark}

In \cite{AllaireDamlamianHornung_TwoScaleBoundary,Neuss_TwoScaleBoundary} the method of two-scale convergence was extended to oscillating surfaces:
\begin{definition}
A sequence of functions $\ueps  \in L^p((0,T)\times\Gamma_{\epsilon})$ for $p \in [1,\infty)$ is said to converge  in the two-scale sense on the surface $\Gamma_{\epsilon}$ to a limit $u_0\in L^p((0,T)\times \Omega \times \Gamma)$, if for every $\phi \in C\left([0,T]\times\overline{\Omega},C_{per}(\Gamma)\right)$ it holds that
\begin{align*}
\lim_{\epsilon \to 0} \epsilon \int_0^T\int_{\Gamma_{\epsilon}} u_{\epsilon}(t,x)\phi\left(t,x,\frac{x}{\epsilon}\right)d\sigma dt = \int_0^T\int_{\Omega}\int_{\Gamma} u_0(t,x,y)\phi(t,x,y)d\sigma_ydxdt .
\end{align*}

We say a  two-scale convergent sequence $u_{\epsilon}$ converges strongly in the two-scale sense, if additionally it holds that
\begin{align*}
\lim_{\epsilon\to 0}  \epsilon^{\frac{1}{p}}\|u_{\epsilon}\|_{L^p((0,T)\times \Gamma_{\epsilon})} = \|u_0\|_{L^p((0,T)\times \Omega \times \Gamma)} .
\end{align*}
\end{definition}
An equivalent characterization of the two-scale convergence can be given via the unfolding operator, see \cite{CioranescuDamlamianDonatoGrisoZakiUnfolding}, which is defined by ($p \in [1,\infty)$) for perforated domains by 
\begin{align*}
\te : L^p((0,T)\times \oe) \rightarrow L^p((0,T)\times \Omega \times Y^{\ast}), \quad \te \ueps(t,x,y)= \ueps\left(t,\epsilon \left[\fxe\right] + \epsilon y \right),
\end{align*}
where $[\cdot]$ denotes the integer part. In the same way we can define the unfolding operator on the whole domin $\Omega$ and we use the same notation. We also define the boundary unfolding operator by
\begin{align*}
\tbe: L^p((0,T)\times \ge) \rightarrow L^p((0,T)\times \Omega \times \Gamma) , \quad \tbe \ueps (t,x,y) = \ueps \left(t,\epsilon \left[\fxe\right] + \epsilon y \right).
\end{align*}
Let us summarize some important properties of the unfolding operator, which can be found in \cite{CioranescuDamlamianDonatoGrisoZakiUnfolding}
\begin{enumerate}
[label = (\roman*)]
\item For $\ueps , \veps \in L^2((0,T)\times \oe)$ it holds that
\begin{align*}
(\ueps,\veps)_{L^2((0,T)\times \oe)} = (\te \ueps , \te \veps)_{L^2((0,T)\times \Omega \times Y^{\ast})}.
\end{align*}
\item For $\ueps \in L^2((0,T),H^1(\oe))$ we have $\nabla_y \te \ueps = \epsilon \te (\nabla \ueps)$.
\item For $\ueps ,\veps \in L^2((0,T)\times \ge)$ it holds that
\begin{align*}
(\ueps,\veps)_{L^2((0,T)\times \ge)} = \epsilon^{-1} (\tbe \ueps , \tbe \veps )_{L^2((0,T)\times \Omega \times \Gamma)}.
\end{align*}
\end{enumerate}
The statements above remain valid if we replace $\oe$ by $\Omega$.

In \cite{BourgeatLuckhausMikelic} the following relation between the unfolding operator and the two-scale convergence has been proved (see also \cite{lenczner1997homogeneisation}).
\begin{lemma}
For every bounded sequence $u_{\epsilon} \in L^p((0,T)\times \Omega)$ the following statements are equivalent
\begin{itemize}
\item[(i)] $u_{\epsilon}\rightarrow u_0$  weakly/strongly in the two-scale sense for $u_0 \in L^p((0,T)\times \Omega \times Y)$.
\item[(ii)] $\te u_{\epsilon}\rightarrow u_0$ weakly/strongly in $L^p((0,T)\times \Omega \times Y)$.
\end{itemize}
The same result holds for $\ueps $ in $L^p((0,T)\times \ge)$ with $\epsilon^{\frac{1}{p}}\|u_{\epsilon}\|_{L^p((0,T)\times \ge)}\le C$ and the boundary unfolding operator $\tbe $. The result is also true, if we replace $\Omega$ with $\oe$ and $Y$ with $Y^{\ast}$.
\end{lemma}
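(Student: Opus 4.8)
The plan is to reduce both equivalences to two exact properties of the unfolding operator that are available because $\epsilon^{-1}\in\N$ and $\Omega=\mathrm{int}\big(\bigcup_{k\in K_{\epsilon}}\epsilon(\overline{Y}+k)\big)$, so the cells tile $\Omega$ with no boundary layer. First, the integral-preservation identity: for every $\psi\in L^1((0,T)\times\Omega)$ one has $\int_0^T\int_\Omega\psi\,dx\,dt=\int_0^T\int_\Omega\int_Y\te\psi\,dy\,dx\,dt$, which follows from property (i) (take the second factor equal to $1$) and is checked cellwise by the change of variables $z=\epsilon[\fxe]+\epsilon y$. Second, $\te$ is multiplicative, $\te(\psi\chi)=\te\psi\,\te\chi$, directly from its definition; applying the integral formula to $|u_\epsilon|^p$ then gives the isometry $\|\te u_\epsilon\|_{L^p((0,T)\times\Omega\times Y)}=\|u_\epsilon\|_{L^p((0,T)\times\Omega)}$.

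Next I set up the bridge between the two formulations. For an admissible test function $\phi\in L^{p'}((0,T)\times\Omega,C_{\per}(\overline{Y}))$ write $\phi_\epsilon(t,x):=\phi(t,x,\fxe)$. A short computation using $Y$-periodicity gives $\te\phi_\epsilon(t,x,y)=\phi\big(t,\epsilon[\fxe]+\epsilon y,y\big)$, and since $\epsilon[\fxe]+\epsilon y\to x$ uniformly, the regularity of $\phi$ together with dominated convergence yields $\te\phi_\epsilon\to\phi$ strongly in $L^{p'}((0,T)\times\Omega\times Y)$. Combining this with the integral identity and multiplicativity gives, for every $\epsilon$, the key relation $\int_0^T\int_\Omega u_\epsilon\phi_\epsilon\,dx\,dt=\int_0^T\int_\Omega\int_Y \te u_\epsilon\,\te\phi_\epsilon\,dy\,dx\,dt$, which is the single identity driving the whole proof.

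For the weak equivalence I argue as follows. If $\te u_\epsilon\rightharpoonup u_0$ weakly in $L^p$, then the right-hand side of the bridge relation converges to $\int_0^T\int_\Omega\int_Y u_0\phi$ by weak--strong pairing (using $\te\phi_\epsilon\to\phi$ strongly), which is exactly the definition of two-scale convergence; this gives (ii)$\Rightarrow$(i). Conversely, the isometry shows $\te u_\epsilon$ is bounded in $L^p$, so for $p\in(1,\infty)$ I extract a weakly convergent subsequence with limit $w$, apply the direction just proved to identify $w$ as a two-scale limit of $u_\epsilon$, conclude $w=u_0$ by uniqueness of the two-scale limit, and upgrade to convergence of the full sequence. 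For the strong equivalence I use the isometry to see that the norm condition in the strong two-scale definition is literally the condition $\|\te u_\epsilon\|_{L^p}\to\|u_0\|_{L^p}$: thus strong (i) gives $\te u_\epsilon\rightharpoonup u_0$ weakly together with convergence of norms, and uniform convexity of $L^p$ for $p\in(1,\infty)$ (the Radon--Riesz property) forces $\te u_\epsilon\to u_0$ strongly, i.e. (ii); the reverse is immediate since strong convergence gives both weak convergence and norm convergence.

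The surface statement is proved verbatim after replacing the volume identity by its boundary analogue $\epsilon\int_{\ge}\psi\,d\sigma=\int_\Omega\int_\Gamma\tbe\psi\,d\sigma_y\,dx$ (property (iii), which encodes the correct $\epsilon^{1/p}$ scaling) and using test functions in $C([0,T]\times\overline{\Omega},C_{\per}(\Gamma))$; replacing $\Omega,Y$ by $\oe,Y^{\ast}$ changes nothing, since the perforated cells tile exactly as well. The main obstacle is twofold: proving $\te\phi_\epsilon\to\phi$ strongly in $L^{p'}$ for merely admissible (not smooth) test functions, which is cleanest to handle by first treating $\phi$ continuous in $x$ and then extending by density using the isometry as a uniform bound; and the non-reflexive endpoint $p=1$, where $L^1$-weak compactness fails, so the reverse weak implication cannot use subsequence extraction and must instead be obtained by testing against a dense countable family, exploiting the boundedness assumed in the statement.
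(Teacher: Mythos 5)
The paper does not actually prove this lemma: it is quoted from the literature (Bourgeat--Luckhaus--Mikeli\'c, see also Lenczner), so there is no internal proof to compare against. Your argument is the standard proof found in those references, and it is correct for $p\in(1,\infty)$: the exact tiling of $\Omega$ by $\epsilon$-cells (guaranteed here by $\epsilon^{-1}\in\N$ and $\Omega$ a hyper-rectangle with integer vertices) gives the integration formula and hence the isometry $\Vert\te u_{\epsilon}\Vert_{L^p}=\Vert u_{\epsilon}\Vert_{L^p}$; the identity $\te\peps(t,x,y)=\phi\left(t,\epsilon\left[\fxe\right]+\epsilon y,y\right)\to\phi$ strongly in $L^{p'}$ together with multiplicativity of $\te$ turns the two-scale pairing into the unfolded pairing, which yields the weak equivalence, and the isometry plus Radon--Riesz yields the strong one. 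Two caveats. First, your treatment of the endpoint $p=1$ does not close: testing an $L^1$-bounded sequence against a countable family does not give weak $L^1$ convergence (continuous functions are not sup-norm dense in $L^\infty$, and without uniform integrability you cannot extend to general $L^\infty$ test functions), and the Radon--Riesz property fails in $L^1$ (e.g.\ $1+\sin(nx)\rightharpoonup 1$ with convergent norms but no strong convergence); the clean statement restricts to $p\in(1,\infty)$, which is all the paper uses. Second, in the density step for $\te\peps\to\phi$ you should note that the trace $x\mapsto\phi(t,x,x/\epsilon)$ is well defined and measurable for $\phi\in L^{p'}((0,T)\times\Omega,C_{\per}(\overline{Y}))$ (a Carath\'eodory-type argument), with the uniform bound $\vert\peps(t,x)\vert\le\Vert\phi(t,x,\cdot)\Vert_{C(\overline{Y})}$ making the approximation by continuous-in-$x$ test functions legitimate. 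The boundary and perforated versions go through exactly as you say, because $\oe$ and $\ge$ are exact unions of the translated cells $\epsilon(Y^{\ast}+k)$ and $\epsilon(\Gamma+k)$, so $\te$ and $\tbe$ retain the corresponding isometry properties (with the factor $\epsilon^{1/p}$ on $\ge$).
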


Now, we define the averaging operator $\ue : L^2((0,T)\times \Omega \times Y^{\ast})\rightarrow L^2((0,T)\times \oe)$ as the $L^2$-adjoint of the unfolding operator. An elemental calculation, see \cite{CioranescuDamlamianDonatoGrisoZakiUnfolding}, shows
\begin{align*}
\ue (\phi)  (t,x) = \int_{Y^{\ast}} \phi \left(t,\epsilon \left(y + \left[\fxe\right]\right), \left\{\fxe\right\}\right)dy,
\end{align*}
with $\{z\} = z - [z]$.
As an immediate consequence of the $L^2$-adjointness we obtain
\begin{align}\label{EstimateAveragingOperator}
\|\ue(\phi)\|_{L^2((0,T)\times \oe)} \le \|\phi\|_{L^2((0,T)\times \Omega \times Y^{\ast})}.
\end{align}
Further, let us define the space
\begin{align*}
\hoy := \left\{ \phi \in H^1(Y^{\ast}) \, : \, \phi = 0 \mbox{ on } \partial Y \cap \partial Y^{\ast} \right\}.
\end{align*}
Then we have the following result
\begin{proposition}\label{DerivativeAveragingOperator}
For all $\phi \in L^2((0,T)\times \Omega ,\hoy)$ it holds that
$\epsilon \nabla_x \ue(\phi) = \ue(\nabla_y \phi )$.
 Especially, we have
\begin{align*}
\|\ue(\phi)\|_{L^2((0,T),\he)} \le \|\phi\|_{L^2((0,T)\times \Omega,\hoy)}.
\end{align*}
In other words, the restriction of $\ue$ to $ L^2((0,T)\times \Omega ,\hoy)$ fulfills
\begin{align*}
\ue: L^2((0,T)\times \Omega , \hoy) \rightarrow L^2((0,T),\he).
\end{align*}
\end{proposition}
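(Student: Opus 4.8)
The plan is to establish the differentiation identity $\epsilon\nabla_x\ue(\phi)=\ue(\nabla_y\phi)$ first, and then read off both the $H^1(\oe)$-regularity of $\ue(\phi)$ and the norm bound as consequences. I would start from the explicit representation
\[
\ue(\phi)(t,x)=\int_{Y^{\ast}}\phi\Big(t,\epsilon\big(y+\big[\tfrac{x}{\epsilon}\big]\big),\big\{\tfrac{x}{\epsilon}\big\}\Big)\,dy,
\]
which shows that on a fixed fluid cell $\epsilon(Y^{\ast}+k)$, where $[\,x/\epsilon\,]=k$ is constant, the macroscopic slot $\epsilon(y+k)$ is integrated out and the only remaining dependence of $\ue(\phi)$ on $x$ is through the fractional part $\{x/\epsilon\}=x/\epsilon-k$. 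Writing $g_k(t,z):=\int_{Y^{\ast}}\phi(t,\epsilon(y+k),z)\,dy$, we thus have $\ue(\phi)(t,x)=g_k(t,x/\epsilon-k)$ on that cell, and the chain rule gives, cell-wise, $\nabla_x\ue(\phi)=\tfrac1\epsilon(\nabla_z g_k)(\cdot,x/\epsilon-k)=\tfrac1\epsilon\,\ue(\nabla_y\phi)$. Since this computation is transparent for regular integrands, I would first verify the claims for $\phi$ in a dense subclass of $L^2((0,T)\times\Omega,\hoy)$ (finite sums $\eta(t,x)\psi(y)$ with $\eta\in C_c^\infty((0,T)\times\Omega)$ and $\psi\in\hoy$ smooth), and then pass to the limit using the contraction estimate \eqref{EstimateAveragingOperator}, which controls $\ue(\phi)$ and, component-wise, $\ue(\nabla_y\phi)$; closedness of the weak gradient then transfers the identity and the $H^1$-membership to general $\phi$.

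The step I expect to be the real obstacle is upgrading the cell-wise gradient to a genuine weak gradient on the \emph{connected} domain $\oe$, that is, showing $\ue(\phi)\in H^1(\oe)$ with no spurious surface contribution along the interfaces shared by neighbouring fluid cells. This is exactly where the boundary condition built into $\hoy$ enters. Consider the internal face between cells $k$ and $k+e_i$, which by the matching-faces property of $Y^{\ast}$ is, up to scaling and translation, the micro-face $\{y_i=1\}\cap\partial Y^{\ast}\subset\partial Y\cap\partial Y^{\ast}$. The one-sided trace of $\ue(\phi)$ coming from cell $k$ equals the trace of $z\mapsto g_k(t,z)$ at $z_i=1$; since trace and $y$-integration commute, this is the $y$-average of the micro-variable traces of $\phi(t,\epsilon(y+k),\cdot)$ on $\partial Y\cap\partial Y^{\ast}$, each of which vanishes because $\phi(t,x',\cdot)\in\hoy$. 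The identical argument from the side of cell $k+e_i$ (micro-face $\{y_i=0\}\cap\partial Y^{\ast}$) gives a vanishing trace as well. Hence $\ue(\phi)$ is continuous across every internal face, both one-sided traces being zero, so its distributional gradient carries no jump measure and coincides with the cell-wise gradient above. I would make this rigorous by testing against $\psi\in C_c^\infty(\oe)^n$, splitting the integral over cells and integrating by parts: the interior face terms cancel because the traces vanish, while the outer boundary $\partial\oe\cap\partial\Omega$ needs no condition since it is not met by $C_c^\infty(\oe)$ test fields.

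Granted the identity $\epsilon\nabla_x\ue(\phi)=\ue(\nabla_y\phi)$ together with $\ue(\phi)\in H^1(\oe)$, the norm bound is immediate. By the definition of the $\he$-inner product,
\[
\|\ue(\phi)\|_{\he}^2=\|\ue(\phi)\|_{L^2(\oe)}^2+\epsilon^2\|\nabla_x\ue(\phi)\|_{L^2(\oe)}^2=\|\ue(\phi)\|_{L^2(\oe)}^2+\|\ue(\nabla_y\phi)\|_{L^2(\oe)}^2.
\]
Applying \eqref{EstimateAveragingOperator} to $\phi$ and, component-wise, to $\nabla_y\phi$, the right-hand side is bounded by $\|\phi\|_{L^2(\Omega\times Y^{\ast})}^2+\|\nabla_y\phi\|_{L^2(\Omega\times Y^{\ast})}^2=\|\phi\|_{L^2(\Omega,\hoy)}^2$. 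Integrating in time yields $\|\ue(\phi)\|_{L^2((0,T),\he)}\le\|\phi\|_{L^2((0,T)\times\Omega,\hoy)}$, and in particular $\ue$ maps $L^2((0,T)\times\Omega,\hoy)$ into $L^2((0,T),\he)$, as claimed.
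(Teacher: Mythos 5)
Your proof is correct, but it takes a genuinely different route from the paper's. You compute $\ue(\phi)$ explicitly cell by cell, differentiate via the chain rule, and then glue the cell-wise $H^1$ pieces into a global element of $H^1(\oe)$ by checking that the one-sided traces on the internal faces (scaled, translated copies of $\partial Y\cap\partial Y^{\ast}$) vanish --- which is exactly where the zero-trace condition in $\hoy$ enters. The paper instead runs a short duality argument: for $\psi\in C_0^{\infty}(\oe)$ it writes $\int_{\oe}\ue(\phi)\,\partial_{x_i}\psi\,dx=\int_{\Omega}\int_{Y^{\ast}}\phi\,\te(\partial_{x_i}\psi)\,dy\,dx$ by $L^2$-adjointness, uses $\nabla_y\te=\epsilon\te\nabla_x$, and integrates by parts in $y$ on $Y^{\ast}$; the boundary term over $\partial Y^{\ast}$ vanishes because $\te(\psi)=0$ on $\Gamma$ and $\phi=0$ on $\partial Y\cap\partial Y^{\ast}$, leaving $-\epsilon^{-1}\int_{\oe}\ue(\partial_{y_i}\phi)\,\psi\,dx$, i.e.\ the claimed identity in the weak sense. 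The duality route is shorter, avoids both your density/closability step and the piecewise-$H^1$ gluing lemma, delegates all cell geometry to already established properties of $\te$ and $\ue$, and sets up the pattern reused immediately afterwards in Proposition \ref{DerivativeUnfoldingOperator}. Your constructive route buys an explicit pointwise description of $\ue(\phi)$ and makes transparent why the boundary condition on $\partial Y\cap\partial Y^{\ast}$ is indispensable: without it the distributional gradient would pick up jump contributions on the internal faces. The derivation of the norm bound from the gradient identity together with \eqref{EstimateAveragingOperator} is essentially identical in both arguments.
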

\begin{proof}
This is an easy consequence of the $L^2$-adjointness of $\ue$ and $\nabla_y \te = \epsilon\te \nabla_x$. In fact for $\phi \in L^2((0,T)\times \Omega , \hoy ) $ and $\psi \in C_0^{\infty}(\oe)$ we have almost everywhere in $(0,T)$
\begin{align*}
\int_{\oe} \ue (\phi) \partial_{x_i} \psi dx &= \int_{\Omega}\int_{Y^{\ast}} \phi \te (\partial_{x_i} \psi) dy dx 
\\
&= \frac{1}{\epsilon} \int_{\Omega}\int_{Y^{\ast}} \phi \partial_{y_i} \te (\psi)dy dx 
\\
&= - \frac{1}{\epsilon} \int_{\Omega}\int_{Y^{\ast}} \partial_{y_i} \phi \te (\psi) dy dx + \frac{1}{\epsilon} \int_{\Omega} \int_{\partial Y^{\ast}} \phi \te (\psi) \nu_i d\sigma_y dx
\\
&= - \frac{1}{\epsilon} \int_{\oe} \ue (\partial_{y_i} \phi ) \psi dx,
\end{align*}
where the boundary term vanishes since $\te(\psi) = 0$ on $\Gamma$ and $\phi = 0$ on $\partial Y \cap \partial Y^{\ast}$. The inequality is an easy consequence of $\eqref{EstimateAveragingOperator}$ and the definition of $\he$.
\end{proof}

This result gives a formula for the generalized time-derivative of $\te \ueps$.
\begin{proposition}\label{DerivativeUnfoldingOperator}
Let $\veps \in L^2((0,T)\times \oe) \cap H^1((0,T),\he')$. Then we have 
\begin{align*}
\te \veps \in H^1((0,T),L^2(\Omega,\hoy)')
\end{align*}
with
\begin{align*}
\langle \partial_t \te \veps , \phi \rangle_{L^2(\Omega,\hoy)',L^2(\Omega,\hoy)} = \langle \partial_t \veps , \ue (\phi) \rangle_{\oe}.
\end{align*}
Additionally, we have
\begin{align*}
\|\partial_t \te \veps \|_{L^2((0,T),L^2(\Omega,\hoy)')} \le \|\partial_t \veps\|_{L^2((0,T),\he')}.
\end{align*}
\end{proposition}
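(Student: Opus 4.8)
The plan is to prove the statement by a duality argument, using the averaging operator $\ue$ as the adjoint of the unfolding operator $\te$. The key observation is that Proposition \ref{DerivativeAveragingOperator} tells us $\ue$ maps $L^2((0,T)\times\Omega,\hoy)$ boundedly into $L^2((0,T),\he)$, so $\ue$ supplies exactly the right test functions for the time-derivative $\partial_t\veps$, which lives in $\he'$. First I would fix $\phi\in C_0^\infty((0,T),L^2(\Omega,\hoy))$ and compute, using property (i) of the unfolding operator (the $L^2$-isometry, extended to $Y^{\ast}$) together with the self-adjointness relation defining $\ue$,
\begin{align*}
\int_0^T (\te\veps,\partial_t\phi)_{L^2(\Omega\times Y^{\ast})}\,dt = \int_0^T (\veps,\ue(\partial_t\phi))_{L^2(\oe)}\,dt.
\end{align*}
Since $\ue$ acts only in the spatial variables and commutes with $\partial_t$, we have $\ue(\partial_t\phi)=\partial_t\ue(\phi)$, and because $\veps\in H^1((0,T),\he')$ we may integrate by parts in time on the right-hand side (the boundary terms vanish as $\phi$ has compact support in time), obtaining $-\int_0^T\langle\partial_t\veps,\ue(\phi)\rangle_{\oe}\,dt$.

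Reading off the definition of the distributional time-derivative of $\te\veps$, this identifies $\langle\partial_t\te\veps,\phi\rangle = \langle\partial_t\veps,\ue(\phi)\rangle_{\oe}$, which is the claimed formula; note the right-hand side is a genuine $\he'$–$\he$ pairing because $\ue(\phi)\in L^2((0,T),\he)$ by Proposition \ref{DerivativeAveragingOperator}. To upgrade this to the assertion that $\te\veps\in H^1((0,T),L^2(\Omega,\hoy)')$, I would use this formula to estimate the functional directly: for almost every $t$,
\begin{align*}
|\langle\partial_t\te\veps,\phi\rangle| \le \|\partial_t\veps\|_{\he'}\,\|\ue(\phi)\|_{\he} \le \|\partial_t\veps\|_{\he'}\,\|\phi\|_{L^2(\Omega,\hoy)},
\end{align*}
where the second inequality is precisely the operator bound from Proposition \ref{DerivativeAveragingOperator}. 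Taking the supremum over $\|\phi\|_{L^2(\Omega,\hoy)}\le 1$ and then the $L^2$-norm in time yields the stated inequality $\|\partial_t\te\veps\|_{L^2((0,T),L^2(\Omega,\hoy)')}\le\|\partial_t\veps\|_{L^2((0,T),\he')}$, which simultaneously certifies that $\te\veps$ has a time-derivative in the dual space and hence lies in $H^1((0,T),L^2(\Omega,\hoy)')$.

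I expect the main subtlety to lie not in the algebra but in the functional-analytic bookkeeping: one must check that $\ue(\phi)$ is genuinely an admissible test function for $\partial_t\veps$ (i.e. lands in $\he$ with the correct measurability and integrability in time), that $\ue$ commutes with the time-derivative so that the integration by parts in $t$ is legitimate, and that the density of $C_0^\infty((0,T),L^2(\Omega,\hoy))$ justifies interpreting the resulting identity as the generalized time-derivative of $\te\veps$. The embedding-of-dual-spaces remarks established just before Lemma \ref{AprioriEstimates} — in particular that the $\he'$ pairing coincides with the $H^1(\oe)'$ pairing on $H^1(\oe)$ — ensure there is no ambiguity in which duality is meant. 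Once these points are in place, the estimate is immediate from the adjoint bound, so the crux is really the clean commuting property $\ue\circ\partial_t=\partial_t\circ\ue$ together with the mapping property of $\ue$ furnished by the preceding proposition.
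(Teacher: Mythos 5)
Your proposal is correct and follows essentially the same duality argument as the paper: both identify $\partial_t \te \veps$ by testing against the adjoint $\ue(\phi)$ and then invoke the mapping bound of Proposition \ref{DerivativeAveragingOperator} for the norm estimate. The only cosmetic difference is that the paper uses tensor-product test functions $\phi(x,y)\psi(t)$ with $\psi \in \mathcal{D}(0,T)$, which sidesteps the (trivial) need to commute $\ue$ with $\partial_t$.
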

\begin{proof}
For $\phi \in L^2(\Omega,\hoy)$ and $\psi \in \mathcal{D}(0,T)$ we obtain immediately from Proposition \ref{DerivativeAveragingOperator}
\begin{align*}
\int_0^T &\int_{\Omega}\int_{Y^{\ast}} \te \veps (t,x,y) \phi(x,y)  \psi'(t) dy dx dt
\\
&= \int_0^T \int_{\oe} \veps(t,x) \ue(\phi)(x) \psi'(t) dx dt
= -\int_0^T \langle \partial_t \veps (t) , \ue (\phi)\rangle_{\oe} \psi(t) dt.
\end{align*}
The inequality is a direct consequence of $\langle \partial_t \veps , \ue (\phi)\rangle_{\oe} = \langle \partial_t \veps , \ue (\phi)\rangle_{\he',\he}$ and  Proposition \ref{DerivativeAveragingOperator} .
\end{proof}

In the next proposition we show that under suitable a priori estimates, the existence of a generalized time-derivative  for a two-scale convergent sequence extends to the limit function. First, we introduce the difference quotient with respect to time. Given a Banach space $X$ and with $0< h \ll 1$, for an arbitrary function $v \in L^2((0,T),X)$ we define 
\begin{align*}
\partial_t^h v(t,x):= \frac{v(t+h,x) - v(t,x)}{h} \quad \mbox{ for } t \in (0,T-h), \, x \in X.
\end{align*}

\begin{proposition}\label{TimeDerivativeTSConvergence}
Let $\veps \in L^2((0,T),\he)\cap H^1((0,T),\he')$ with
\begin{align*}
\|\veps \|_{L^2((0,T),\he)} + \|\partial_t \veps \|_{L^2((0,T),\he')} \le C.
\end{align*}
Then there exists $v_0 \in L^2((0,T)\times \Omega, H_{\per}^1(Y^{\ast})) \cap H^1((0,T),L^2(\Omega,H^1_{\per}(Y^{\ast})'))$, 
such that up to a subsequence it holds that
\begin{align*}
\veps &\rightarrow v_0 &\mbox{ in the two-scale sense,}
\\
\epsilon \nabla \veps &\rightarrow \nabla_y v_0 &\mbox{ in the two-scale sense}.
\end{align*}
Further, for every $\phi \in \mathcal{D}((0,T)\times  \overline{\Omega}, C_{\per}^{\infty}(\overline{Y^{\ast}}))$ and $\peps(t,x):= \phi\left(t,x,\fxe\right) $ it holds that (up to a subsequence)
\begin{align*}
\lim_{\epsilon \to 0} \int_0^T \langle \partial_t \veps , \peps \rangle_{H^1(\oe)',H^1(\oe)} dt = \int_0^T \langle \partial_t v_0 , \phi \rangle_{L^2(\Omega,H^1_{\per}(Y^{\ast})'),L^2(\Omega,H^1_{\per}(Y^{\ast}))} dt.
\end{align*}
\end{proposition}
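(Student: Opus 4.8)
The plan is to combine the classical two-scale compactness machinery for the function and its $\epsilon$-scaled gradient with an integration-by-parts argument in time, the latter tailored to the weighted norm of $\he$; the role played by the commuting property (Proposition \ref{DerivativeUnfoldingOperator}) is carried out inline through the $L^2$-isometry of $\te$.

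First I would extract $v_0$ and its spatial regularity. Since $\|\veps\|_{L^2((0,T)\times\oe)}\le\|\veps\|_{L^2((0,T),\he)}\le C$ and $\epsilon\|\nabla\veps\|_{L^2((0,T)\times\oe)}\le\|\veps\|_{L^2((0,T),\he)}\le C$, both $\veps$ and $\epsilon\nabla\veps$ are bounded in $L^2$. The standard two-scale compactness theorem for $\epsilon$-scaled gradients then gives, along a subsequence, $\veps\to v_0$ and $\epsilon\nabla\veps\to\nabla_y v_0$ in the two-scale sense with $v_0\in L^2((0,T)\times\Omega,H^1_{\per}(Y^{\ast}))$; equivalently, via property (ii) of the unfolding operator, $\te\veps\rightharpoonup v_0$ and $\nabla_y\te\veps=\epsilon\,\te(\nabla\veps)\rightharpoonup\nabla_y v_0$ weakly in $L^2((0,T)\times\Omega\times Y^{\ast})$.

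Next I would handle the time-derivative. Fix $\phi\in\mathcal{D}((0,T)\times\overline{\Omega},C_{\per}^{\infty}(\overline{Y^{\ast}}))$ and set $\peps(t,x):=\phi(t,x,x/\epsilon)$. Because $\veps\in L^2((0,T),\he)\cap H^1((0,T),\he')$ has a continuous $L^2(\oe)$-valued representative and $\peps$ is smooth with compact support in $(0,T)$, integration by parts in time produces no boundary terms, so that, using $\langle\partial_t\veps,\peps\rangle_{H^1(\oe)',H^1(\oe)}=\langle\partial_t\veps,\peps\rangle_{\he',\he}$,
\begin{align*}
\int_0^T\langle\partial_t\veps,\peps\rangle_{H^1(\oe)',H^1(\oe)}\,dt=-\int_0^T\int_{\oe}\veps\,\partial_t\peps\,dx\,dt=-\int_0^T\int_{\Omega}\int_{Y^{\ast}}\te\veps\,\partial_t(\te\peps)\,dy\,dx\,dt,
\end{align*}
where the second equality uses property (i) and that $\te$ commutes with $\partial_t$. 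Since $\partial_t\peps(t,x)=(\partial_t\phi)(t,x,x/\epsilon)$ with $\partial_t\phi$ again smooth and $Y$-periodic, the unfolding of an oscillating test function converges strongly, $\te(\partial_t\peps)\to\partial_t\phi$ in $L^2((0,T)\times\Omega\times Y^{\ast})$; combined with $\te\veps\rightharpoonup v_0$ weakly, the weak-times-strong product passes to the limit and yields
\begin{align*}
\lim_{\epsilon\to0}\int_0^T\langle\partial_t\veps,\peps\rangle_{H^1(\oe)',H^1(\oe)}\,dt=-\int_0^T\int_{\Omega}\int_{Y^{\ast}}v_0\,\partial_t\phi\,dy\,dx\,dt.
\end{align*}

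Finally I would identify this limit with $\int_0^T\langle\partial_t v_0,\phi\rangle\,dt$, which simultaneously yields the time regularity of $v_0$. Bounding the left-hand side by $\|\partial_t\veps\|_{L^2((0,T),\he')}\,\|\peps\|_{L^2((0,T),\he)}\le C\|\peps\|_{L^2((0,T),\he)}$ and using $\epsilon\nabla\peps=\epsilon(\nabla_x\phi)(\cdot,\cdot,\cdot/\epsilon)+(\nabla_y\phi)(\cdot,\cdot,\cdot/\epsilon)$, one computes
\begin{align*}
\lim_{\epsilon\to0}\|\peps\|_{L^2((0,T),\he)}^2=\|\phi\|_{L^2((0,T)\times\Omega,H^1(Y^{\ast}))}^2=\|\phi\|_{L^2((0,T)\times\Omega,H^1_{\per}(Y^{\ast}))}^2,
\end{align*}
the last equality holding since $\phi$ is $Y$-periodic. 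Hence $\phi\mapsto-\int_0^T\int_{\Omega}\int_{Y^{\ast}}v_0\,\partial_t\phi$ is bounded in the $L^2((0,T)\times\Omega,H^1_{\per}(Y^{\ast}))$-norm; by density it defines $\partial_t v_0\in L^2((0,T),L^2(\Omega,H^1_{\per}(Y^{\ast}))')$, i.e. $v_0\in H^1((0,T),L^2(\Omega,H^1_{\per}(Y^{\ast})'))$, and the displayed limit is precisely the generalized-derivative identity, giving the claim. I expect this last step to be the main obstacle: the passage to the \emph{periodic} dual $H^1_{\per}(Y^{\ast})'$ is possible only because the weighted $\he$-norm, adapted to the slow $\epsilon^2$-diffusion scaling, is exactly calibrated so that $\|\peps\|_{\he}$ reproduces the full $H^1(Y^{\ast})$-norm of $\phi$ in the limit; with the unweighted $H^1(\oe)$-norm the test-function norm blows up like $\epsilon^{-1}$ and no uniform bound survives.
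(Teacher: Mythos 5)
Your proof is correct and follows essentially the same route as the paper: standard two-scale compactness for $\veps$ and $\epsilon\nabla\veps$, testing the time derivative against the oscillating functions $\peps$, and the key calibration $\|\peps\|_{L^2((0,T),\he)}\to\|\phi\|_{L^2((0,T)\times\Omega,H^1(Y^{\ast}))}$, which is precisely how the paper exploits the weighted $\he$-norm. The only (harmless) difference is in the final step: the paper establishes $\partial_t v_0\in L^2((0,T),L^2(\Omega,H^1_{\per}(Y^{\ast})'))$ by bounding the difference quotients $\partial_t^h v_0$ uniformly in $h$ and invoking reflexivity, whereas you bound the distributional derivative $\phi\mapsto -\int_0^T\int_{\Omega}\int_{Y^{\ast}} v_0\,\partial_t\phi\,dy\,dx\,dt$ directly and extend by density; both arguments rest on the identical estimate and are interchangeable.
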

\begin{proof}
By standard two-scale compactness results, a function $v_0 \in L^2((0,T)\times \Omega , H^1_{\per}(Y^{\ast}))$ exists such that (up to a subsequence) 
\begin{align*}
\veps &\rightarrow v_0 &\mbox{ in the two-scale sense,}
\\
\epsilon \nabla \veps &\rightarrow \nabla_y v_0 &\mbox{ in the two-scale sense}.
\end{align*}
To establish the existence of the weak time-derivative of $v_0$ we show that $\partial_t^h v_0 $ is bounded in $L^2((0,T-h),L^2(\Omega,H_{\per}^1(Y^{\ast})'))$ uniformly with respect to $h$.  
For  $\phi \in \mathcal{D}((0,T)\times  \Omega, C_{\per}^{\infty}(\overline{Y^{\ast}}))$ we define  $\peps (t,x):= \phi\left(t,x,\fxe\right)$. Then it holds that
\begin{align*}
\langle \partial_t^h v_0 &, \phi \rangle_{L^2((0,T-h),L^2(\Omega,H_{\per}^1(Y^{\ast})')),L^2((0,T-h),L^2(\Omega,H_{\per}^1(Y^{\ast})))}
\\
&= \int_0^{T-h} \int_{\Omega} \int_{Y^{\ast}} \partial_t^h v_0(t,x,y) \phi(t,x,y) dy dx dt 
\\
&= \lim_{\epsilon \to 0 } \int_0^{T-h} \int_{\oe} \partial_t^h \veps (t,x) \peps(t,x) dx dt 
\\
&= \lim_{\epsilon \to 0} \int_0^{T-h} \langle \partial_t^h \veps(t) , \peps(t,\cdot) \rangle_{\he',\he} dt
\\
&\le \lim_{\epsilon \to 0} \|\partial_t^h \veps \|_{L^2((0,T-h),\he')} \|\peps\|_{L^2((0,T-h),\he)}
\\
&\le C \lim_{\epsilon \to 0} \|\peps\|_{L^2((0,T-h),\he)}. 
\end{align*}
The smoothness of $\phi$ immediately gives 
\begin{align*}
\|\peps\|_{L^2((0,T-h),\he)}^2 &= \int_0^{T-h} \int_{\oe} \left\vert \phi\left(t,x,\fxe\right)\right\vert^2 + \epsilon^2 \left\vert \nabla_x \phi\left(t,x,\fxe\right)\right\vert^2  + \left\vert \nabla_y \phi \left(t,x,\fxe\right)\right\vert^2 dx dt
\\
&\overset{\epsilon\to 0}{\longrightarrow} \Vert \phi \Vert_{L^2((0,T -h),L^2(\Omega,H^1(Y^{\ast})))}^2, 
\end{align*}
yielding 
\begin{align*}
\left\vert \langle \partial_t^h v_0 , \phi \rangle_{L^2((0,T-h),L^2(\Omega,H_{\per}^1(Y^{\ast})')),L^2((0,T-h),L^2(\Omega,H_{\per}^1(Y^{\ast})))} \right\vert \le C \Vert \phi \Vert_{L^2((0,T),H^1(Y^{\ast}))}.
\end{align*}
By density arguments, this result holds for all $\phi \in L^2((0,T-h),L^2(\Omega,H_{\per}^1(Y^{\ast})))$. Since this space is reflexive, we obtain
\begin{align*}
\|\partial_t^h v_0 \|_{L^2((0,T-h),L^2(\Omega,H_{\per}^1(Y^{\ast})'))} \le C
\end{align*}
uniformly with respect to $h$. Hence, $\partial_t v_0 \in L^2((0,T),L^2(\Omega,H_{\per}^1(Y^{\ast})'))$. Further, for $\phi \in \mathcal{D}\big((0,T) \times \overline{\Omega}, C_{\per}^{\infty}(\overline{Y^{\ast}}))\big)$ and $\peps (t,x):=\phi\left(t,x,\fxe\right)$ we obtain by integration by parts that 
\begin{align*}
\int_0^T \langle \partial_t \veps , \peps \rangle_{H^1(\oe)',H^1(\oe)} dt  &= - \int_0^T \int_{\oe} \veps (t,x) \partial_t\phi\left(t,x,\fxe\right) dx dt 
\\
&\overset{\epsilon \to 0}{\longrightarrow} -\int_0^T \int_{\Omega}\int_{Y^{\ast}} v_0(t,x,y) \partial_t  \phi(t,x,y) dy dx dt 
\\
&= \int_0^T \langle \partial_t v_0 ,\phi \rangle_{L^2(\Omega,H^1_{\per}(Y^{\ast})'),L^2(\Omega,H^1_{\per}(Y^{\ast}))}.
\end{align*}
With this, the proposition is proved. 
\end{proof}

\begin{remark}
If the condition for the time-derivative $\partial_t \veps$ in Proposition \ref{TimeDerivativeTSConvergence} is replaced by the weaker one, namely $\partial_t \veps \in L^2((0,T),H^1(\oe)')$ with
\begin{align*}
\Vert \partial_t \veps \Vert_{L^2((0,T),H^1(\oe)') } \le C,
\end{align*}
the existence of $\partial_t v_0$ is not guaranteed anymore. However, using similar arguments as in the proof above, one shows that the time-derivative of $\bar{v}_0:= \int_{Y^{\ast}} v_0 dy $ exists. More precisely, we have $\bar{v}_0 \in L^2((0,T),H^1(\Omega)')$ and  for all $\phi \in H^1(\Omega)$ it holds that
\begin{align*}
\lim_{\epsilon\to 0} \int_0^T \langle \partial_t \veps , \phi\rangle_{H^1(\oe)',H^1(\oe)} = \int_0^T \langle \partial_t \bar{v}_0 , \phi \rangle_{H^1(\Omega)',H^1(\Omega)}.
\end{align*}
\end{remark}

For the proof of the strong two-scale compactness result we make use of the following lemma, which gives a relation between differences of shifted unfolded functions and the functions itself.

\begin{lemma}\label{EstimateShiftsUnfoldingAndSequence}
Let $\veps \in L^2((0,T)\times \oe)$. For $0 < h \ll 1$  and $|z|< h$ it holds that
\begin{align*}
\big\|\te \veps (t,x + z , y) - \te \veps \big\|_{L^2(0,T) \times \Omega_{2h} \times Y^{\ast})}^2 \le \sum_{j \in \{0,1\}^n} \|\delta_l \veps \|_{L^2((0,T)\times \oeh)}^2,
\end{align*}
with $l = l(\epsilon,z,j) = j + \left[\frac{z}{\epsilon}\right]$.
\end{lemma}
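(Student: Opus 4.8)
The plan is to reduce the difference of two shifted copies of the unfolding operator to a single unfolded integer-shift of $\veps$, and then to integrate by means of the norm-preserving property of $\te$.

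First I would examine how the integer part behaves under the macroscopic shift by $z$. Writing $\fxe = \left[\fxe\right] + \left\{\fxe\right\}$ and $\frac{z}{\epsilon} = \left[\frac{z}{\epsilon}\right] + \left\{\frac{z}{\epsilon}\right\}$, every component of $\left\{\fxe\right\} + \left\{\frac{z}{\epsilon}\right\}$ lies in $[0,2)$, so its componentwise integer part is a vector $j \in \{0,1\}^n$ depending only on the fractional part $\xi := \left\{\fxe\right\}$ (and on $z,\epsilon$). This gives $\left[\frac{x+z}{\epsilon}\right] = \left[\fxe\right] + \left[\frac{z}{\epsilon}\right] + j$, whence, with $l = l(\epsilon,z,j) := j + \left[\frac{z}{\epsilon}\right]$,
\[
\te\veps(t,x+z,y) - \te\veps(t,x,y) = \veps\Big(t,\epsilon\big[\tfrac{x}{\epsilon}\big]+\epsilon y+\epsilon l\Big) - \veps\Big(t,\epsilon\big[\tfrac{x}{\epsilon}\big]+\epsilon y\Big) = \te(\delta_l\veps)(t,x,y).
\]
The crucial feature here is that $l$ is not a fixed shift vector: it is piecewise constant in $x$, equal to $l_j := j + \left[\frac{z}{\epsilon}\right]$ exactly on the set where $\xi$ falls into a sub-box $Y_j$.

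Next I would carry out the integration. I would partition $Y = [0,1)^n$ into the $2^n$ boxes $Y_j = \prod_{i=1}^n I_{j_i}$, where $I_0 = [0,1-\{z/\epsilon\}_i)$ and $I_1 = [1-\{z/\epsilon\}_i,1)$; these satisfy $\vert Y_j\vert \le 1$ and $\sum_j \vert Y_j\vert = 1$. Splitting the $x$–integral accordingly, on each piece $l_j$ is constant and the integrand $\te(\delta_{l_j}\veps)$ depends on $x$ only through $\left[\fxe\right]$, not through $\xi$. Integrating the fractional part over $Y_j$ then produces a factor $\vert Y_j\vert \le 1$, while the norm-preserving property of $\te$ (equivalently the change of variables $x' = \epsilon\left[\fxe\right]+\epsilon y$) rewrites the remaining $x$– and $y$–integrals as $\Vert\delta_{l_j}\veps\Vert^2_{L^2((0,T)\times\oeh)}$. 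Summing over $j \in \{0,1\}^n$ yields the asserted bound.

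The step I expect to be the main obstacle is the geometric bookkeeping tied to the $x$–dependence of $j$ and to the two different buffer domains. One must check that for $x$ in the left-hand domain $\Omega^{2h}$ the base points $\epsilon\left[\fxe\right]+\epsilon y$ indeed lie in $\oeh$, and that, since $\vert l_j\epsilon\vert \le \vert z\vert + \sqrt{n}\,\epsilon$ is of order $h$, the shifted points $\epsilon\left[\fxe\right]+\epsilon y+\epsilon l_j$ still lie in $\oe$, so that $\delta_{l_j}\veps$ is well defined and its $L^2$–norm over $\oeh$ controls every term. This is precisely why a $2h$–buffer is imposed on the macroscopic domain on the left while an $h$–buffer suffices on the right.
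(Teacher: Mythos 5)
Your argument is correct: the decomposition $\left[\frac{x+z}{\epsilon}\right]=\left[\fxe\right]+\left[\frac{z}{\epsilon}\right]+j$ with $j\in\{0,1\}^n$ determined by the fractional part, followed by splitting each cell into the $2^n$ sub-boxes $Y_j$ and using the measure-preserving change of variables for $\te$, is precisely the standard proof of this shift estimate. The paper itself does not prove the lemma but only cites \cite[p.\ 709--710]{NeussJaeger_EffectiveTransmission} and \cite[Proof of Theorem 3]{GahnNeussRaduKnabnerEffectiveModelSubstrateChanneling}, and your proposal reconstructs essentially the argument given there, including the correct accounting of the $h$ versus $2h$ buffers needed so that $\delta_{l}\veps$ is defined on $\oeh$.
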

\begin{proof}
This result was proved for thin domains in \cite[p. 709-710]{NeussJaeger_EffectiveTransmission} and extended to domains in \cite[Proof of Theorem 3]{GahnNeussRaduKnabnerEffectiveModelSubstrateChanneling}.
\end{proof}

Now, we are able to formulate the strong two-scale compactness result.
\begin{theorem}
\label{StrongTwoScaleCompactness}
Consider a sequence of functions $\veps \in L^2((0,T),H^1(\oe))\cap H^1((0,T),\he')$ satisfying the following conditions 
\begin{enumerate}
[label = (\roman*)]
\item\label{StrongTSConvergenceConditionAprioriEstimate} An $\epsilon$-independent $C > 0$ exists such that 
\begin{align*}
\|\veps \|_{L^2((0,T),\he)} +  \|\partial_t \veps \|_{L^2((0,T),\he')} \le C. 
\end{align*}
\item\label{StrongTSConvergenceConditionShifts} For $0 < h \ll 1 $ and $\ell\in \Z^n$ with $|\ell\epsilon|< h$, it holds that
\begin{align*}
\|\delta \veps \|_{L^2((0,T),L^2(\oeh))} + \epsilon \|\nabla \delta \veps \|_{L^2((0,T),L^2(\oeh))} \overset{\epsilon l \to 0}{\longrightarrow} 0.
\end{align*}
\end{enumerate}
Then, there exists $v_0 \in L^2((0,T)\times \Omega , H_{\per}^1(Y^{\ast}))$, such that for $\beta \in \left(\frac12,1\right)$ and $p\in [1,2)$ it holds that
\begin{align*}
\te \veps \rightarrow v_0 \quad \mbox{ in } L^p(\Omega,L^2((0,T),H^{\beta}(Y^{\ast}))).
\end{align*}
\end{theorem}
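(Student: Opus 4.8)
The plan is to prove the statement by showing that the unfolded sequence $U_\epsilon := \te \veps$ is relatively compact in $L^p(\Omega, B)$ with $B := L^2((0,T), H^\beta(Y^{\ast}))$, using a Banach-valued Kolmogorov--Simon criterion in which the macro-variable $x$ plays the role of the integration variable and $B$ is the target space, and then identifying every limit point with the weak two-scale limit $v_0$. The criterion requires three ingredients: boundedness in $L^p(\Omega,B)$, equicontinuity of $x$-translations, and relative compactness in $B$ of the integral means $\int_\omega U_\epsilon\,dx$ over subsets $\omega \subset\subset \Omega$.

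First I would record the candidate limit. From the a priori bound \ref{StrongTSConvergenceConditionAprioriEstimate} and standard (weak) two-scale compactness there is $v_0 \in L^2((0,T)\times\Omega, H^1_{\per}(Y^{\ast}))$ with $\te\veps \rightharpoonup v_0$ and $\nabla_y\te\veps = \epsilon\,\te(\nabla\veps) \rightharpoonup \nabla_y v_0$ weakly in $L^2((0,T)\times\Omega\times Y^{\ast})$. Using the unfolding isometry together with $\nabla_y\te\veps = \epsilon\,\te(\nabla\veps)$, condition \ref{StrongTSConvergenceConditionAprioriEstimate} shows that $U_\epsilon$ is bounded in $L^2(\Omega, L^2((0,T), H^1(Y^{\ast})))$. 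Since $\Omega$ is bounded, for $p<2$ this simultaneously provides the $L^p(\Omega,B)$-bound and the equi-integrability of $\|U_\epsilon(x)\|_B^p$ needed to rule out concentration; this is exactly the reason the admissible range is $p\in[1,2)$.

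Next I would verify the two remaining conditions. For equicontinuity in $x$, I apply Lemma \ref{EstimateShiftsUnfoldingAndSequence} to $\veps$ and, after multiplying by $\epsilon$, to each $\partial_{x_i}\veps$; combined with \ref{StrongTSConvergenceConditionShifts} (read as a uniform modulus of continuity in $|\ell\epsilon|$) and $\nabla_y\te\veps = \epsilon\,\te(\nabla\veps)$, this gives $\sup_\epsilon\|U_\epsilon(\cdot+z)-U_\epsilon\|_{L^2(\Omega_{2h}, L^2((0,T),H^1(Y^{\ast})))}\to 0$ as $z\to 0$, hence the same in $L^p(\Omega_{2h},B)$ because $H^1(Y^{\ast})\hookrightarrow H^\beta(Y^{\ast})$, for every small $h$. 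The integral-means condition is the crux: for $\omega\subset\subset\Omega$ set $W_\epsilon := \int_\omega U_\epsilon(\cdot,x,\cdot)\,dx$, which is bounded in $L^2((0,T),H^1(Y^{\ast}))$. Testing its time-derivative against $\psi\in\hoy$ amounts to testing $\partial_t\te\veps$ against $\chi_\omega\otimes\psi$, so by Proposition \ref{DerivativeUnfoldingOperator} one has $\langle\partial_t W_\epsilon,\psi\rangle_{\hoy',\hoy} = \langle\partial_t\veps,\ue(\chi_\omega\otimes\psi)\rangle_{\oe}$, and Proposition \ref{DerivativeAveragingOperator} yields $\|\ue(\chi_\omega\otimes\psi)\|_{\he}\le |\omega|^{1/2}\|\psi\|_{\hoy}$, whence $\|\partial_t W_\epsilon\|_{L^2((0,T),\hoy')}\le C$. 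Since $H^1(Y^{\ast})\hookrightarrow\hookrightarrow H^\beta(Y^{\ast})\hookrightarrow\hoy'$ for $\beta\in(\tfrac12,1)$, the Aubin--Lions--Simon lemma makes $\{W_\epsilon\}_\epsilon$ relatively compact in $B$.

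With all three conditions established, the Banach-valued Kolmogorov--Simon theorem gives relative compactness of $\{U_\epsilon\}$ in $L^p(\Omega,B)$ for $p\in[1,2)$. To finish, I identify the limit: any subsequence has a further subsequence converging strongly in $L^p(\Omega,B)$, and since $B\hookrightarrow L^2((0,T),L^2(Y^{\ast}))$ and $\Omega$ is bounded, such convergence also holds in $L^p((0,T)\times\Omega\times Y^{\ast})$; by uniqueness of the weak two-scale limit the limit must equal $v_0$. As every subsequence produces the same limit, the whole sequence satisfies $\te\veps\to v_0$ in $L^p(\Omega,B)$. I expect the \emph{main obstacle} to be precisely the integral-means step: because \ref{StrongTSConvergenceConditionAprioriEstimate} controls $\partial_t\veps$ only globally in $x$ and not fibre-wise, one cannot apply Aubin--Lions directly on the $(t,y)$-fibre over a fixed $x$; the remedy is to pre-average over a macroscopic patch $\omega$ and transport the time-derivative through the unfolding, which is where the commuting property (Proposition \ref{DerivativeUnfoldingOperator}) and the averaging-operator estimate (Proposition \ref{DerivativeAveragingOperator}) are indispensable.
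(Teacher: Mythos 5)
Your proposal is correct and follows essentially the same route as the paper: both verify the three hypotheses of a Banach-valued Kolmogorov--Simon criterion (boundedness/decay near $\partial\Omega$ for $p<2$, translation equicontinuity via Lemma \ref{EstimateShiftsUnfoldingAndSequence} and condition \ref{StrongTSConvergenceConditionShifts}, and Aubin--Lions compactness of the macroscopically averaged unfolded sequence using Propositions \ref{DerivativeAveragingOperator} and \ref{DerivativeUnfoldingOperator}). Your closing identification of the limit with the weak two-scale limit $v_0$ is a small but welcome addition that the paper leaves implicit.
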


\begin{remark}
The strong convergence of $\te \veps $ to $v_0$ in $L^2((0,T)\times \Omega , H^{\beta}(Y^{\ast}))$ can be obtained if the condition \ref{StrongTSConvergenceConditionShifts} is replaced by 
\begin{align*}
\|\delta \veps \|_{L^2((0,T),L^2(\oeh))} + \epsilon \|\nabla \delta \veps \|_{L^2((0,T),L^2(\oeh))} \overset{h \to 0}{\longrightarrow} 0,
\end{align*}
where $|\ell\epsilon|< h$. For this, in the proof below one has to make use of a result that is similar to \cite[Theorem 2.2]{GahnNeussRaduKolmogorovCompactness}, which can be proved in the same way as there. However, to obtain the results stated here, the strong convergence for $p \in [1, 2)$ is sufficient. 
\end{remark}

\begin{proof}[Proof of Theorem \ref{StrongTwoScaleCompactness}]
We apply \cite[Corollary 2.5]{GahnNeussRaduKolmogorovCompactness} , which gives a generalization of Simon compactness results from \cite{Simon} for domains in $\R^n$, and therefore we have to check the following properties:
\begin{enumerate}
[label = (\alph*)]
\item\label{ConditionEstimate} For $A \subset \Omega$ measurable, the sequence $V_{\epsilon}(t,y):= \int_A \te \veps (t,x,y)dx $ is relatively compact in $L^2((0,T),H^{\beta}(Y^{\ast}))$.
\item\label{ConditionShift} For $0<h\ll 1 $ and $|z|<h$ it holds that
\begin{align*}
\sup_{\epsilon} \big\|\te \veps (t,x + z,y) - \te \veps \big\|_{L^p(\Omega^h, L^2((0,T),H^{\beta}(Y^{\ast})))} \overset{z \to 0 }{\longrightarrow} 0.
\end{align*}
\item\label{ConditionBoundary} It holds that ($h>0$)
\begin{align*}
\sup_{\epsilon} \big\| \te \veps \big\|_{L^p(\Omega \setminus \Omega^h , L^2((0,T),H^{\beta}(Y^{\ast})))} \overset{h\to 0}{\longrightarrow} 0. 
\end{align*}
\end{enumerate}
First of all, let $A\subset \Omega$ and $V_{\epsilon} $ defined as in \ref{ConditionEstimate}. The properties of the unfolding operator and the condition  \ref{StrongTSConvergenceConditionAprioriEstimate} imply
\begin{align*}
\|V_{\epsilon}\|_{L^2((0,T),H^1(Y^{\ast}))}^2 \le \big\|\te \veps \big\|^2_{L^2((0,T)\times \Omega , H^1(Y^{\ast}))} \le C\|\veps\|_{L^2((0,T),\he)}^2\le C.
\end{align*}
Further, it holds for all $\phi \in \hoy$ that
\begin{align*}
\langle \partial_t V_{\epsilon} , \phi \rangle_{\hoy',\hoy} = \big\langle \partial_t \te \veps , \chi_A \phi \big\rangle_{L^2(\Omega,\hoy)',L^2(\Omega,\hoy)},
\end{align*}
where $\chi_A$ denotes the characteristic function on $A$. This implies for $\phi \in \hoy$ with $\|\phi\|_{H^1(Y^{\ast})} \le 1$ together with Proposition \ref{DerivativeUnfoldingOperator}
\begin{align*}
\big|\langle \partial_t V_{\epsilon} , \phi \rangle_{\hoy',\hoy}\big| &\le \big\|\partial_t \te \veps \big\|_{L^2(\Omega,\hoy)'} \|\chi_A \phi\|_{L^2(\Omega,\hoy)}
\le C\|\partial_t \veps \|_{\he}.
\end{align*}
Now, the condition \ref{StrongTSConvergenceConditionAprioriEstimate} implies that 
\begin{align*}
\|\partial_t V_{\epsilon}\|_{L^2((0,T),\hoy)'} \le C,
\end{align*}
\ie $V_{\epsilon}$  is bounded in $L^2((0,T),H^1(Y^{\ast}))\cap H^1((0,T),\hoy')$. Since the embedding $H^1(Y^{\ast})\hookrightarrow H^{\beta}(Y^{\ast}) $ is compact for $\beta \in \left(\frac12,1\right)$ whereas $H^{\beta}(Y^{\ast}) \hookrightarrow \hoy'$ continuously, the Aubin-Lions Lemma implies that $V_{\epsilon}$ is relatively compact in $L^2((0,T),H^{\beta}(Y^{\ast}))$. This is condition \ref{ConditionEstimate} above. 

To prove \ref{ConditionShift}, we use Lemma \ref{EstimateShiftsUnfoldingAndSequence}  to obtain that for $0<h \ll 1$ and $|z|<h$ and $\epsilon $ small enough
\begin{align*}
\big\|\te \veps &(t,x + z , y) - \te \veps \big\|_{L^p(\Omega_{2h},L^2((0,T),H^{\beta}(Y^{\ast})))}^2 
\\
&\le C \big\|\te \veps(t,x+z ,y ) -\te \veps \|^2_{L^2((0,T)\times \Omega_{2h},H^1(Y))}
\\
&\le C\sum_{j\in \{0,1\}^n} \|\delta \veps \|_{L^2((0,T)\times \oeh)}^2 + C \epsilon^2 \| \nabla \delta\veps\|_{L^2((0,T)\times \oeh)}^2 
\end{align*}
with $l= \epsilon \left(j + \left[\frac{z}{\epsilon}\right] \right)$  (in the definition of $\delta$). Hence, for $\epsilon,z \to 0$ we have $\ell\epsilon \to 0$. Due to the condition \ref{StrongTSConvergenceConditionShifts} the right-hand side converges to $0$ for $\epsilon,z\to 0$. This means, that \ref{ConditionShift} holds for all but finitely many $\epsilon$.  However, for these finitely many $\epsilon$, we can use the standard Kolmogorov compactness result, and therefore \ref{ConditionShift} is proved.

Condition \ref{ConditionBoundary} is an easy consequence of the H\"older inequality. For $p^{\ast} := \frac{2p}{2-p}$, 
\begin{align*}
\|\te \veps\|_{L^p(\Omega\setminus \Omega^h , L^2((0,T),H^{\beta}(Y^{\ast})))} \le \left|\Omega \setminus \Omega^h\right|^{p^{\ast}} \|\te \veps \|_{L^2((0,T) \times \Omega,H^1(Y))} \le Ch^{p^{\ast}} \overset{h\to 0}{\longrightarrow} 0,
\end{align*}
where in the last inequality we used the condition \ref{StrongTSConvergenceConditionAprioriEstimate}.
\end{proof}

\section{Derivation of the macroscopic model}
\label{DerivationMacroscopicModel}

In this section we use the compactnes results obtained in Section \ref{StrongTwoScaleResult} and the \textit{a priori} estimates from Section \ref{SectionAprioriEstimates} to derive the macroscopic model, obtained for $\epsilon \to 0$. In a first step, we derive an effective model defined on the reference domain, more precisely the reference element. Eventually, we transform the model to the one defined on a moving domain, described with the help of time- and space-dependent reference elements.

\begin{proposition}
\label{ConvergenceResults}
Let $\ueps$ be a sequence of weak solutions to Problem P$_T$ in $\eqref{MicProblemFixedDomain}$.
There exists a $u_0\in L^2((0,T)\times \Omega,H^1_{\per}(Y^{\ast})))\cap L^{\infty}((0,T),L^2(\Omega\times Y^{\ast}))$ with $J_0 u_0 \in L^2((0,T)\times \Omega,H^1_{\per}(Y^{\ast}))\cap H^1((0,T),L^2(\Omega,H^1_{\per}(Y^{\ast})'))$, such that, up to a subsequence, for $p\in [1,2)$ and $\beta \in \left(\frac12,1\right)$ it holds that
\begin{align*}
\te \ueps &\rightarrow u_0 &\mbox{ in }& L^p(\Omega,L^2((0,T)\times Y^{\ast})),
\\
\te \ueps &\rightarrow u_0 &\mbox{ in }& L^p(\Omega,L^2((0,T)\times \Gamma)),
\\
\te (\jeps \ueps) &\rightarrow J_0 u_0 &\mbox{ in }& L^p(\Omega,L^2((0,T),H^{\beta}(Y^{\ast}))).
\end{align*}
Additionally, for every $\phi \in \mathcal{D}\big((0,T),L^2(\Omega) , H_{\per}^1(Y^{\ast})\big)$ and $\peps(t,x):= \phi\left(t,x,\fxe\right) $
\begin{align*}
\lim_{\epsilon \to 0} \int_0^T \langle \partial_t (\jeps \ueps) , \peps \rangle_{H^1(\oe)',H^1(\oe)} dt = \int_0^T \langle \partial_t (J_0 u_0) , \phi \rangle_{L^2(\Omega,H^1_{\per}(Y^{\ast})'),L^2(\Omega,H^1_{\per}(Y^{\ast}))} dt.
\end{align*}
\end{proposition}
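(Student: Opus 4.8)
The plan is to collect the convergences in Proposition~\ref{ConvergenceResults} by applying the machinery built in Sections~\ref{SectionAprioriEstimates} and~\ref{StrongTwoScaleResult} to the two sequences $\ueps$ and $\weps := \jeps \ueps$. The \textit{a priori} estimate~\eqref{AprioriEstimateTimeDerivativeJepsUeps} shows that $\weps$ satisfies condition~\ref{StrongTSConvergenceConditionAprioriEstimate} of Theorem~\ref{StrongTwoScaleCompactness}, and Lemma~\ref{ApriorEstimatesShifts} provides exactly the shift estimate~\ref{StrongTSConvergenceConditionShifts}, provided one first checks that the right-hand side $C\|\delta(\jeps(0)\ueps^0)\|_{L^2(\oeh)} + C|\ell\epsilon| + C\sqrt{\epsilon}$ tends to $0$ as $\epsilon\ell \to 0$; the first summand vanishes because of the shift-continuity of $\ueps^0$ from Assumption~\ref{AssumptionInitialConditions} together with the shift-continuity of $\jeps(0)$ from Assumption~\ref{AssumptionEstimateShiftGradientJeps}, while the remaining two summands are manifestly small. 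Hence Theorem~\ref{StrongTwoScaleCompactness} applies to $\weps$ and yields a limit that I would call $W_0 \in L^2((0,T)\times\Omega,H^1_{\per}(Y^{\ast}))$ with $\te\weps \to W_0$ strongly in $L^p(\Omega,L^2((0,T),H^\beta(Y^{\ast})))$.

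\textbf{The identification} $W_0 = J_0 u_0$ is the conceptual heart of the argument. First, standard weak two-scale compactness applied to $\ueps$ (using only~\eqref{AprioriEstimateUeps}) produces $u_0 \in L^2((0,T)\times\Omega,H^1_{\per}(Y^{\ast}))\cap L^\infty((0,T),L^2(\Omega\times Y^{\ast}))$ with $\te\ueps \rightharpoonup u_0$ and $\epsilon\,\te(\nabla\ueps) = \nabla_y\te\ueps \rightharpoonup \nabla_y u_0$ weakly. Next, Assumption~\ref{AssumptionConvergenceTransformation} gives $\jeps \to J_0$ strongly in the two-scale sense in every $L^p$, equivalently $\te\jeps \to J_0 = \det\nabla_y S_0$ strongly in $L^r$ for all $r<\infty$; since the unfolding operator is multiplicative, $\te\weps = \te\jeps \cdot \te\ueps$, and I would pass to the limit in this product by pairing the strong $L^r$-convergence of $\te\jeps$ against the weak $L^2$-convergence of $\te\ueps$ on every measurable $A\subset\Omega$, obtaining $\te\weps \rightharpoonup J_0 u_0$ weakly. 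Weak limits being unique, $W_0 = J_0 u_0$, which upgrades the weak limit of $\te\weps$ to the \emph{strong} convergence asserted in the third line. The two $L^p$-convergences of $\te\ueps$ in the bulk and on $\Gamma$ follow by the same Kolmogorov--Simon argument, once one notes that the shift- and \textit{a priori} estimates for $\ueps$ itself (bulk estimate~\eqref{AprioriEstimateUeps} and the boundary trace inequality~\eqref{TraceInequality}) verify the hypotheses of the compactness theorem and its boundary analogue.

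\textbf{For the time-derivative identity}, I would apply Proposition~\ref{TimeDerivativeTSConvergence} to the sequence $\weps = \jeps\ueps$, whose hypotheses are precisely the two bounds in~\eqref{AprioriEstimateTimeDerivativeJepsUeps}. This directly yields $\partial_t W_0 \in L^2((0,T),L^2(\Omega,H^1_{\per}(Y^{\ast})'))$ together with the limit passage
\begin{align*}
\lim_{\epsilon\to 0}\int_0^T \langle \partial_t\weps,\peps\rangle_{H^1(\oe)',H^1(\oe)}\,dt = \int_0^T \langle \partial_t W_0,\phi\rangle_{L^2(\Omega,H^1_{\per}(Y^{\ast})'),L^2(\Omega,H^1_{\per}(Y^{\ast}))}\,dt
\end{align*}
for all $\phi \in \mathcal{D}((0,T)\times\overline{\Omega},C^\infty_{\per}(\overline{Y^{\ast}}))$, and substituting the already-established identity $W_0 = J_0 u_0$ gives the stated formula; a density argument extends it to the class $\mathcal{D}((0,T),L^2(\Omega),H^1_{\per}(Y^{\ast}))$ named in the statement. \textbf{The main obstacle} I anticipate is verifying the product identification $\te\weps \rightharpoonup J_0 u_0$ rigorously: one must ensure the strong $L^r$-convergence of $\te\jeps$ holds with an exponent large enough (via H\"older) to be compatible with the weak $L^2$-convergence of $\te\ueps$, and must confirm that the strong two-scale convergence of $\jeps$ in Assumption~\ref{AssumptionConvergenceTransformation} genuinely transfers to strong $L^r$-convergence of the \emph{unfolded} sequence $\te\jeps$ — this is exactly the point flagged in the remark after Assumption~\ref{AssumptionConvergenceTransformation} that ``we need the strong convergence of $\te\jeps$''. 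Everything else reduces to bookkeeping with the compactness and time-derivative results already proved.
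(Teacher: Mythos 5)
Your treatment of the product $\weps=\jeps\ueps$ — verifying the hypotheses of Theorem \ref{StrongTwoScaleCompactness} via Lemma \ref{AprioriEstimates}, Lemma \ref{ApriorEstimatesShifts} and Assumption \ref{AssumptionInitialConditions}, identifying the limit as $J_0u_0$ by pairing the strong convergence of $\te\jeps$ with the weak convergence of $\te\ueps$, and invoking Proposition \ref{TimeDerivativeTSConvergence} for the time-derivative — matches the paper's proof. The gap is in your last step for the two convergences of $\te\ueps$ itself: you claim these ``follow by the same Kolmogorov--Simon argument'' using the estimates for $\ueps$. This would fail. Theorem \ref{StrongTwoScaleCompactness} requires the uniform bound $\|\partial_t\veps\|_{L^2((0,T),\he')}\le C$, and precisely this bound is \emph{not} available for $\ueps$: Remark \ref{BemerkungAprioriEstimaetZeitableitungUeps} states explicitly that no $\epsilon$-uniform estimate for $\partial_t\ueps$ in $L^2((0,T),\he')$ can be obtained, only for $\partial_t(\jeps\ueps)$. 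Likewise, the shift estimates of Lemma \ref{ApriorEstimatesShifts} control $\delta(\jeps\ueps)$, not $\delta\ueps$, and the paper states no ``boundary analogue'' of the compactness theorem.

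The paper instead extracts the strong convergence of $\te\ueps$ \emph{from} that of $\te(\jeps\ueps)$: since $\jeps\ge c_0>0$ and $\te\jeps\to J_0$ strongly (in $L^2((0,T)\times\Omega,H^{\beta}(Y^{\ast}))$, Remark \ref{StrongConvergenceJeps}), one writes $\te\ueps=\te(\jeps\ueps)/\te\jeps$ and concludes with Lebesgue's dominated convergence theorem. The boundary convergence is obtained the same way, after first transferring the strong convergence of $\te(\jeps\ueps)$ to $L^p(\Omega,L^2((0,T)\times\Gamma))$ via the continuous trace embedding $H^{\beta}(Y^{\ast})\hookrightarrow L^2(\Gamma)$ (which is exactly why the $H^{\beta}$-convergence with $\beta>\tfrac12$ matters). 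If you replace your final paragraph's appeal to a direct compactness argument for $\ueps$ by this division step, the rest of your proposal is sound and coincides with the paper's argument.
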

Before giving the proof of Proposition \ref{ConvergenceResults}, we state briefly a result about the strong convergence of $\jeps$ and the regularity of the time-derivative $\partial_t J_0$. This  follows directly from the assumptions on $\jeps$. 
\begin{remark}\label{StrongConvergenceJeps} 
Theorem \ref{StrongTwoScaleCompactness} implies that 
\begin{align*}
\te \jeps \rightarrow J_0 \quad \mbox{ in } L^2((0,T)\times \Omega,H^{\beta}(Y^{\ast}))
\end{align*}
for $\beta \in \left(\frac12,1\right)$.   Further, by Proposition \ref{TimeDerivativeTSConvergence} and the boundedness of $\te \jeps $ in $L^{\infty}((0,T)\times \Omega, W^{1,p}(Y^{\ast}))$, for every $p \in [1,\infty)$ we obtain
\begin{align*}
J_0 &\in L^{\infty}((0,T)\times \Omega, W^{1,p}_{\per}(Y^{\ast})),
\\
\nabla J_0 &\in L^{\infty}((0,T)\times \Omega \times Y^{\ast}),
\\ 
\partial_t J_0 &\in L^2((0,T),L^2(\Omega,H_{\per}^1(Y^{\ast})')),
\\
c_0 &\le J_0 \le C_0.
\end{align*} 
\end{remark}

\begin{proof}[Proof of Proposition \ref{ConvergenceResults}]
Assumption \ref{AssumptionConvergenceTransformation} gives $\te \jeps \rightarrow J_0$ in $L^q((0,T)\times \Omega \times Y^{\ast})$ for every $q \in [1,\infty)$. The a priori estimates from Lemma \ref{AprioriEstimates} imply the existence of a $u_0 \in L^2((0,T)\times \Omega, H^1_{\per}(Y^{\ast}))$, such that up to a subsequence
\begin{align*}
\te \ueps &\rightharpoonup u_0 \quad \mbox{ weakly in } L^2((0,T)\times \Omega,H^1(Y^{\ast})).
\end{align*}
Further, since $\te \ueps \in L^{\infty}((0,T),L^2(\Omega \times Y^{\ast})$ it holds that (see again Lemma \ref{AprioriEstimates})
\begin{align*}
\Vert u_0 \Vert_{L^p((0,T),L^2(\Omega \times Y^{\ast}))} \le \lim_{\epsilon \to 0 } \Vert \te \ueps \Vert_{L^p((0,T),L^2(\Omega \times Y^{\ast}))} \le C
\end{align*}
for all $p \in [1,\infty)$ with a constant $0 < C$ independent of $p$. Hence, we have obtained that $u_0 \in L^{\infty}((0,T),L^2(\Omega \times Y^{\ast}))$.

With  $\te \jeps \in L^{\infty}((0,T)\times \Omega \times Y^{\ast})$ and since it converges strongly in the $L^q$ sense for any $q\in [1,\infty)$, we obtain 
\begin{align*}
\te (\jeps \ueps ) \rightharpoonup J_0 u_0 \quad \mbox{ weakly in } L^2((0,T)\times \Omega \times Y^{\ast}).
\end{align*}

Further, due to Lemmata \ref{AprioriEstimates} and   \ref{ApriorEstimatesShifts}, recalling Assumption \ref{AssumptionInitialConditions}, the sequence $\jeps \ueps$ fulfills the conditions of Proposition \ref{TimeDerivativeTSConvergence} and Theorem \ref{StrongTwoScaleCompactness}. Hence $J_0 u_0 \in L^2((0,T)\times \Omega ,H^1_{\per}(Y^{\ast}))\cap H^1((0,T),L^2(\Omega,H^1_{\per}(Y^{\ast})'))$ and for $p \in [1,2)$ and $\beta \in \left(\frac12,1\right)$ it holds that, up to a subsequence, 
\begin{align*}
\te (\jeps \ueps) \rightarrow J_0 u_0 \quad \mbox{ in } L^p(\Omega, L^2((0,T),H^{\beta}(Y^{\ast}))).
\end{align*}
Moreover, for every $\phi \in \mathcal{D}\big((0,T) \times \overline{\Omega} , H_{\per}^1(Y^{\ast})\big)$ and $\peps(t,x):= \phi\left(t,x,\fxe\right) $, 
\begin{align*}
\lim_{\epsilon \to 0} \int_0^T \langle \partial_t (\jeps \ueps) , \peps \rangle_{H^1(\oe)',H^1(\oe)} dt = \int_0^T \langle \partial_t (J_0 u_0) , \phi \rangle_{L^2(\Omega,H^1_{\per}(Y^{\ast})'),L^2(\Omega,H^1_{\per}(Y^{\ast}))} dt.
\end{align*}
Especially, due to the continuity of the embedding $H^{\beta}(Y^{\ast}) \hookrightarrow L^2(\Gamma)$ we obtain
\begin{align*}
\te (\jeps\ueps) \rightarrow  J_0u_0 \quad \mbox{ in } L^p(\Omega,L^2((0,T) \times \Gamma)).
\end{align*}
Now, since $\jeps \geq c >0$, the strong convergence of $\te \jeps$ (which holds in $L^2((0,T)\times \Omega ,H^{\beta}(Y^{\ast})))$, see Remark \ref{StrongConvergenceJeps}) and Lebesgue's  Dominated Convergence Theorem imply 
\begin{align*}
\te \ueps &\rightarrow u_0 &\mbox{ in }& L^p(\Omega,L^2((0,T)\times Y^{\ast})),
\\
\te \ueps &\rightarrow u_0 &\mbox{ in }& L^p(\Omega,L^2((0,T)\times \Gamma)).
\end{align*}
\end{proof}

We remark that for the time-derivative $\partial_t u_0$ we obtain the following regularity: From the product rule we obtain in the distributional sense
\begin{align*}
\partial_t u_0 = J_0^{-1} \partial_t (J_0 u_0) - J_0^{-1} u_0 \partial_t J_0.
\end{align*}
Since $J_0^{-1} \in L^{\infty}((0,T)\times \Omega, W^{1,p}_{per}(Y^{\ast}))$ for every $p \in [1,\infty)$, and especially $p > n$, we have for every $\phi \in H_{\per}^1(Y^{\ast})$ that $J_0^{-1}(t,x,\cdot_y) \phi \in H_{\per}^1(Y^{\ast})$ for almost every $(t,x) \in (0,T)\times \Omega$. Hence, from $\partial_t (J_0 u_0 ) \in L^2((0,T)\times \Omega ,H^1_{\per}(Y^{\ast})')$ we obtain that the first term on the right in the equation above is an element of $L^2((0,T)\times \Omega,H^1_{\per}(Y^{\ast})')$. However, this is not true for the second term. We only have that, for $\phi \in W_{\per}^{1,q}(Y^{\ast})$ with $q > n$, 
\begin{align*}
\langle J_0^{-1} u_0 \partial_t J_0, \phi &\rangle_{W_{\per}^{1,q}(Y^{\ast})',W_{\per}^{1,q}(Y^{\ast})} = \langle \partial_t J_0, J_0^{-1} u_0 \phi \rangle_{H_{\per}^1(Y^{\ast})',H_{\per}^1(Y^{\ast})},
\end{align*}
almost everywhere in $(0,T)\times \Omega $. Therefore $\partial_t u_0 \in L^1((0,T)\times \Omega,W^{1,q}_{\per}(Y^{\ast})')$.

\begin{corollary}\label{ConvergenceResultsNonlinearities}
Up to a subsequence, it holds that
\begin{align*}
f(\ueps) &\rightarrow f(u_0) &\mbox{ in the two-scale sense,}
\\
g(\ueps) &\rightarrow g(u_0) &\mbox{ in the two-scale sense on } \ge.
\end{align*}
\end{corollary}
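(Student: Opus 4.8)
The plan is to deduce the convergence of the nonlinearities directly from the strong two-scale convergence of $\ueps$ established in Proposition \ref{ConvergenceResults}, together with the global Lipschitz continuity of $f$ and $g$ assumed in (AD2). Since, by the Lemma relating the unfolding operator and two-scale convergence, it is enough to prove strong $L^p$-convergence of the unfolded sequences $\te(f(\ueps))$ and $\tbe(g(\ueps))$, I would first upgrade the convergences from Proposition \ref{ConvergenceResults} to the full product domain. Because $p\in[1,2)$ and $(0,T)\times Y^{\ast}$ has finite measure, Hölder's inequality gives the continuous embedding $L^2((0,T)\times Y^{\ast})\hookrightarrow L^p((0,T)\times Y^{\ast})$ for fixed $x$; taking $p$-th powers, integrating over $\Omega$ and using Fubini yields
\begin{align*}
\|\te\ueps - u_0\|_{L^p((0,T)\times\Omega\times Y^{\ast})} \le C\,\|\te\ueps - u_0\|_{L^p(\Omega,L^2((0,T)\times Y^{\ast}))} \overset{\epsilon\to 0}{\longrightarrow} 0,
\end{align*}
and analogously $\tbe\ueps\to u_0$ strongly in $L^p((0,T)\times\Omega\times\Gamma)$, using the identification $\tbe\ueps=(\te\ueps)|_{\Gamma}$ of the boundary unfolding with the trace on $\Gamma$.

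Next I would exploit that the unfolding operators commute with the superposition by $f$ and $g$. Since these act only on the value of $\ueps$ while the time variable is not unfolded, one has, for a.e. $(t,x,y)$, the identities $\te(f(\ueps))(t,x,y)=f(t,\te\ueps(t,x,y))$ and $\tbe(g(\ueps))(t,x,y)=g(t,\tbe\ueps(t,x,y))$ on $\Gamma$. The global Lipschitz continuity of $f$ (uniform in $t$, with constant $L$) then gives the pointwise bound $|f(t,\te\ueps)-f(t,u_0)|\le L|\te\ueps-u_0|$, so that
\begin{align*}
\|\te(f(\ueps)) - f(u_0)\|_{L^p((0,T)\times\Omega\times Y^{\ast})} \le L\,\|\te\ueps - u_0\|_{L^p((0,T)\times\Omega\times Y^{\ast})} \longrightarrow 0,
\end{align*}
and the completely analogous estimate on $\Gamma$ for $g$. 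This already gives strong $L^p$-convergence of the unfolded nonlinearities, hence in particular the strong two-scale convergence of $f(\ueps)$ and $g(\ueps)$, which is stronger than the weak two-scale convergence claimed.

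To invoke the equivalence lemma it remains only to verify its boundedness hypotheses. For this I would use the linear growth $|f(t,z)|\le |f(t,0)|+L|z|$, noting that $f\in L^2((0,T)\times\R)$ together with Lipschitz continuity forces $f(\cdot,0)\in L^2((0,T))$ (integrate $|f(t,0)|\le|f(t,z)|+L|z|$ over a unit $z$-interval), combined with the a priori bound $\|\ueps\|_{L^2((0,T),\he)}\le C$ from Lemma \ref{AprioriEstimates}, to obtain $f(\ueps)$ bounded in $L^2((0,T)\times\oe)$; for the surface term the trace inequality \eqref{TraceInequality} and Lemma \ref{AprioriEstimates} give the weighted bound $\epsilon^{1/p}\|g(\ueps)\|_{L^p((0,T)\times\ge)}\le C$. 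The argument is essentially routine, and the only points requiring some care are the norm bookkeeping in the first step, i.e. passing from the mixed norm $L^p(\Omega,L^2(\cdots))$ delivered by Theorem \ref{StrongTwoScaleCompactness} and Proposition \ref{ConvergenceResults} to the genuine $L^p$-norm on the product domain required by the equivalence lemma, and the verification of the integrability of $f(\cdot,0)$ and $g(\cdot,0)$ entering the boundedness hypotheses.
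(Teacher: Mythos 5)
Your proof is correct and follows essentially the same route as the paper, whose own proof is a one-line reference to the strong convergence of $\te\ueps$ from Proposition \ref{ConvergenceResults} and to \cite[Corollary 5]{GahnNeussRaduKnabner2018a} for the details. You have simply written out those details — commuting the unfolding operator with the Nemytskii operator, applying the uniform Lipschitz bound, and checking the boundedness hypotheses of the unfolding/two-scale equivalence lemma — all of which are carried out correctly.
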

\begin{proof}
This is an easy consequence of the strong convergence results for $\ueps$ from Proposition \ref{ConvergenceResults}. For more details see \cite[Corollary 5]{GahnNeussRaduKnabner2018a}.
\end{proof}

The assumptions on the transformation $\seps$ guarantee that  
\begin{align*}
\deps &\rightarrow \left[\nabla_y S_0\right]^{-1} D \left[\nabla_y S_0\right]^{-T} &\mbox{ strongly in the two-scale sense},
\\
\epsilon^{-1}\veps &\rightarrow \left[\nabla_y S_0\right]^{-1} \partial_t S_0 &\mbox{ strongly in the two-scale sense,}
\end{align*}
where the strong two-scale convergence is valid with respect to every $L^p$-norm for $p\in [1,\infty)$. To simplify the writing we define 
\begin{align*}
D_0^{\ast}:= \left[\nabla_y S_0\right]^{-1} D \left[\nabla_y S_0\right]^{-T}, \quad q_0^{\ast}:= \left[\nabla_y S_0 \right]^{-1} q_0 ,
 \quad  v_0^{\ast}:= \left[\nabla_y S_0\right]^{-1} \partial_t S_0 
\end{align*}
and state \textbf{Problem P$_M$}, which is to find $u_0$ solving 
\begin{align*}
\partial_t (J_0 u_0) - \nabla_y \cdot \left( J_0 D_0^{\ast} \nabla_y u_0 - J_0 q_0^{\ast} u_0 + J_0 v_0^{\ast} u_0 \right) &=J_0 f(u_0) &\mbox{ in }& (0,T)\times \Omega \times Y^{\ast},
\\
-  J_0 D_0^{\ast} \nabla_y u_0  \cdot \nu &= -J_0 g(u_0)  &\mbox{ on }& (0,T)\times \Omega \times \Gamma, 
\\
u_0(0) &= u^0 &\mbox{ in }& \Omega \times Y^{\ast},
\\
u_0(t,x,\cdot) \mbox{ is } Y&\mbox{-periodic}.
\end{align*}
Due to the low regularity of $\partial_t u_0 $, we cannot guarantee $u_0 \in C^0([0,T], L^2(\Omega \times Y^{\ast}))$, so the initial condition $u_0 (0) = u^0$ holds only in a weaker sense. In fact, we show that there is a set of measure zero $N \subset (0,T)$, such that  
\begin{align*}
\lim_{t \to 0 ,\, t\notin N} \Vert u_0(t) - u^0 \Vert_{L^1(\Omega \times Y^{\ast})} = 0.
\end{align*}
Problem P$_M$ is the macroscopic counterpart of Problem P$_T$, as follows from  
\begin{theorem}\label{TheoremMacroscopicProblem}
The limit function $u_0 \in  L^2((0,T)\times \Omega,H^1_{\per}(Y^{\ast}))$ satisfies $\partial_t (J_0 u_0) \in L^2((0,T),L^2(\Omega,H_{\per}^1(Y^{\ast})'))$, and is the unique weak solution of 
Problem P$_M$. 
\end{theorem}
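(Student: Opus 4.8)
The plan is to obtain Problem~P$_M$ as the two-scale limit of the variational equation \eqref{VarEquaMicProblemFixedDomain}, tested against oscillating functions $\peps(t,x)=\phi(t,x,\fxe)$ with $\phi\in\mathcal{D}\big((0,T)\times\overline{\Omega},C^\infty_{\per}(\overline{Y^{\ast}})\big)$, and then to argue by density. First I would rewrite each integral over $\oe$ (resp.\ $\ge$) through the unfolding operator $\te$ (resp.\ $\tbe$), using $(\ueps,\veps)_{L^2(\oe)}=(\te\ueps,\te\veps)_{L^2(\Omega\times Y^{\ast})}$, the identity $\nabla_y\te\ueps=\epsilon\,\te(\nabla\ueps)$ and $\te(\epsilon\nabla\peps)\to\nabla_y\phi$ strongly. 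By the assumptions on $\seps$ the coefficients unfold strongly, $\te(\jeps\deps)\to J_0D_0^{\ast}$, $\te(\jeps\qeps)\to J_0q_0^{\ast}$ and $\te(\jeps\,\epsilon^{-1}\veps)\to J_0v_0^{\ast}$, while $\nabla_y\te\ueps\rightharpoonup\nabla_y u_0$ weakly and $\te\ueps\to u_0$ strongly by Proposition~\ref{ConvergenceResults}. Passing to the limit term by term (strong$\times$weak in the diffusion integral, strong$\times$strong elsewhere) yields the diffusion, advection and transport integrals of Problem~P$_M$; the transport term carries no prefactor $\epsilon$, so the missing power is precisely the one supplied by $\epsilon^{-1}\veps\to v_0^{\ast}$. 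For the reactions I would combine the Lipschitz continuity of $f,g$ with the strong convergences $\te\ueps\to u_0$ in $L^p(\Omega,L^2((0,T)\times Y^{\ast}))$ and $\tbe\ueps\to u_0$ in $L^p(\Omega,L^2((0,T)\times\Gamma))$ (Corollary~\ref{ConvergenceResultsNonlinearities}) to reach $J_0f(u_0)$ and $J_0g(u_0)$, and I would invoke the last statement of Proposition~\ref{ConvergenceResults} for the time-derivative term. Since each resulting term is continuous on $L^2((0,T)\times\Omega,H^1_{\per}(Y^{\ast}))$ and $\partial_t(J_0u_0)\in L^2((0,T),L^2(\Omega,H^1_{\per}(Y^{\ast})'))$, a density argument extends the identity to all admissible test functions and simultaneously gives the claimed regularity.

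To identify the initial datum I would integrate by parts in time already at the microscopic level: choosing $\peps$ with $\peps(T,\cdot)=0$, equation \eqref{VarEquaMicProblemFixedDomain} gives $\int_0^T\langle\partial_t(\jeps\ueps),\peps\rangle_{\oe}\,dt=-\int_0^T\!\int_{\oe}\jeps\ueps\,\partial_t\peps\,dx\,dt-\int_{\oe}\jeps(0)\ueps^0\,\peps(0)\,dx$, which is exact because $\jeps\ueps\in C([0,T],L^2(\oe))$ by Lemma~\ref{AprioriEstimates}. Letting $\epsilon\to0$, the volume term converges to $-\int_0^T\!\int_\Omega\!\int_{Y^{\ast}}J_0u_0\,\partial_t\phi$ by the strong convergence of $\te(\jeps\ueps)$, and $\jeps(0)\ueps^0\to J_0(0)u^0$ two-scale by Assumption~\ref{AssumptionInitialConditions}. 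Since all remaining terms of the weak equation converge for arbitrary $\phi$, the limit of the left-hand side equals $\int_0^T\langle\partial_t(J_0u_0),\phi\rangle$, and because $J_0u_0\in L^2((0,T)\times\Omega,H^1_{\per}(Y^{\ast}))\cap H^1((0,T),L^2(\Omega,H^1_{\per}(Y^{\ast})'))\hookrightarrow C([0,T],L^2(\Omega\times Y^{\ast}))$, the Gelfand-triple integration-by-parts formula forces $(J_0u_0)(0)=J_0(0)u^0$ in $L^2(\Omega\times Y^{\ast})$. Writing $u_0(t)=J_0(t)^{-1}(J_0u_0)(t)$ for $t\notin N$, using $J_0\ge c_0$ and the continuity of $J_0u_0$, I then obtain the stated limit $\|u_0(t)-u^0\|_{L^1(\Omega\times Y^{\ast})}\to0$; only an $L^1$-statement is claimed because $u_0$ itself carries no time trace.

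For uniqueness I would take two weak solutions $u_0^1,u_0^2$, set $w:=u_0^1-u_0^2$, and test the difference of the equations with $w\in L^2((0,T)\times\Omega,H^1_{\per}(Y^{\ast}))$. The decisive point, mirroring Lemma~\ref{AprioriEstimates}, is the time-derivative: since $\partial_t(J_0w)$ only lives in $L^2(\Omega,H^1_{\per}(Y^{\ast})')$ and $\partial_t J_0$ is not bounded pointwise, I would exploit the Jacobi--Liouville identity $\partial_t J_0=\nabla_y\cdot(J_0v_0^{\ast})$ (the limit analogue of $\partial_t\jeps=\nabla\cdot(\jeps\veps)$), so that $\tfrac12\langle\partial_t J_0,w^2\rangle=-\int_{Y^{\ast}}J_0v_0^{\ast}w\cdot\nabla_y w\,dy+\tfrac12\int_{\Gamma}J_0v_0^{\ast}\cdot\nu\,w^2\,d\sigma_y$, the bulk part of which cancels the transport term $\int_{Y^{\ast}}J_0v_0^{\ast}w\cdot\nabla_y w$ coming from the equation. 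This renders the energy identity $\langle\partial_t(J_0w),w\rangle=\tfrac12\tfrac{d}{dt}\|\sqrt{J_0}\,w\|_{L^2(Y^{\ast})}^2+\tfrac12\langle\partial_t J_0,w^2\rangle$ meaningful, which I would justify rigorously by Steklov averaging in time. Coercivity of $D_0^{\ast}$ (from positivity of $D$ and the nondegeneracy of $\nabla_y S_0$), the Lipschitz continuity of $f$ and $g$, and the trace inequality on $Y^{\ast}$ to absorb the $\Gamma$-integrals then reduce the estimate to $\tfrac{d}{dt}\|\sqrt{J_0}\,w\|^2\le C\|w\|^2$, and Gronwall's inequality with $w(0)=0$ yields $w\equiv0$.

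The main obstacle is the low time-regularity of the limit: because only $J_0u_0$, and not $u_0$, admits an $L^2$-valued time trace, both the identification of the initial condition and the energy estimate must be phrased through $J_0u_0$, and the formally ill-defined term $\langle\partial_t J_0,w^2\rangle$ must first be given meaning via the divergence structure $\partial_t J_0=\nabla_y\cdot(J_0v_0^{\ast})$ before any integration by parts in time is legitimate.
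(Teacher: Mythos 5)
Your derivation of the limit equation and the identification of the initial condition follow essentially the same route as the paper: testing \eqref{VarEquaMicProblemFixedDomain} with $\peps(t,x)=\phi(t,x,\fxe)$, passing to the limit via the strong/weak (two-scale) convergences of Proposition \ref{ConvergenceResults} and Corollary \ref{ConvergenceResultsNonlinearities}, and then recovering $(J_0u_0)(0)=J_0(0)u^0$ from the continuity of $J_0u_0$ in $L^2(\Omega\times Y^{\ast})$ together with the splitting that yields the $L^1$-statement for $u_0(t)-u^0$. The one place you go beyond the paper is uniqueness, which the paper asserts without proof: your energy argument for $w=u_0^1-u_0^2$, resting on the limit Liouville identity $\partial_t J_0=\nabla_y\cdot(J_0 v_0^{\ast})$ to give meaning to $\langle\partial_t J_0,w^2\rangle$ and to cancel the transport term (exactly mirroring the mechanism of Lemma \ref{AprioriEstimates}), is the natural completion and appears sound, provided the Steklov-averaging justification of the energy identity is carried out. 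A minor imprecision: the advection term is a strong-times-weak pairing (only weak two-scale convergence is assumed for $\tqeps$ in \ref{AssumptionQeps}), not strong-times-strong, but this does not affect the limit passage since one factor converges strongly.
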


\begin{proof}

Testing in $\eqref{VarEquaMicProblemFixedDomain} $ with $\peps(t,x):= \phi\left(t,x,\fxe\right)$ for $\phi \in C_0^{\infty}\big([0,T)\times \Omega,C_{\per}^{\infty}(Y^{\ast})\big)$ gives 
\begin{align*}
&\int_0^T \langle \partial_t (\jeps \ueps) , \peps \rangle_{\oe} dt 
\\
+& \int_0^T \int_{\oe} \left[\epsilon^2 \jeps \deps \nabla \ueps - \epsilon \jeps \qeps \ueps + \jeps \veps \ueps \right] \cdot \left[\nabla_x \phi\left(t,x,\fxe\right) + \frac{1}{\epsilon}\nabla_y\phi\left(t,x,\fxe\right) \right] dx dt 
\\
=& \int_0^T \int_{\oe} \jeps f(\ueps) \peps dx dt + \epsilon \int_0^T \int_{\ge} \jeps g(\ueps) \peps d\sigma dt.
\end{align*}
Using the convergence results from Proposition \ref{ConvergenceResults}, we obtain for $\epsilon \to 0$
\begin{align*}
\int_0^T &\langle \partial_t (J_0 u_0 ) , \phi \rangle_{L^2(\Omega,H_{\per}^1(Y^{\ast})'),L^2(\Omega,H_{\per}^1(Y^{\ast}))}dt
\\
&+ \int_0^T \int_{\Omega} \int_{Y^{\ast}} \left[ J_0 D_0^{\ast} \nabla_y u_0 - J_0 q_0^{\ast} u_0 + J_0  v_0^{\ast} u_0 \right] \cdot \nabla_y \phi dy dx dt
\\
=& \int_0^T\int_{\Omega}\int_{Y^{\ast}} J_0 f(u_0) \phi dy dx dt + \int_0^T \int_{\Omega}\int_{Y^{\ast}} J_0 g(u_0) \phi d\sigma_y dx dt.
\end{align*}
The regularity of $J_0 u_0$ stated in Proposition \ref{ConvergenceResults} implies that $J_0 u_0 \in C^0([0,T],L^2(\Omega \times Y^{\ast}))$. Hence, integration by parts with respect to time in the equations above, and similar arguments as before, imply 
\begin{align*}
(J_0 u_0) (0) = J_0(0) u^0. 
\end{align*}
Now, for almost every $t \in (0,T)$ we have  
\begin{align*}
\Vert u_0(t) - u^0 \Vert_{L^1(\Omega \times Y^{\ast})} \le C \big(\Vert J_0(0) u_0(t) - J_0(t) u_0(t)\Vert_{L^1(\Omega \times Y^{\ast})} + \Vert J_0(t)u_0(t) -  J_0(0) u^0 \Vert_{L^1(\Omega \times Y^{\ast})} \big).
\end{align*}
The second term vanishes for $t \to 0$, due to the observations above. For the first term we obtain  using $u_0 \in L^{\infty}((0,T),L^2(\Omega \times Y^{\ast}))$
\begin{align*}
\Vert J_0(0)u_0(t) - J_0(t) u_0(t)\Vert_{L^1(\Omega \times Y^{\ast})} &\le \Vert J_0(0) - J_0(t)\Vert_{L^2(\Omega \times Y^{\ast})} \Vert u_0(t)\Vert_{L^2(\Omega \times Y^{\ast})}
\\
&\le C \Vert J_0(0) - J_0(t)\Vert_{L^2(\Omega \times Y^{\ast})} \overset{t \to 0}{\longrightarrow } 0 .
\end{align*}
\end{proof}

\subsection{Transformation to the evolving macroscopic domain}
Now we formulate the macroscopic Problem P$_M$ 
in an evolving macroscopic domain. We define $Y(t,x):= S_0(t,x,Y^{\ast})$ and $\Gamma(t,x):= \partial Y(t,x) \setminus \partial Y$, and
\begin{align*}
\tuo: \bigcup_{(t,x)\in (0,T)\times \Omega} \{(t,x)\} \times Y(t,x) \rightarrow \R, \quad \tuo (t,x,y) = u_0\left(t,x,S_0^{-1}(t,x,y)\right). 
\end{align*}
Further, we set for $(t,x)\in (0,T)\times \Omega$ and $y \in Y(t,x)$
\begin{align*}
 \tilde{q}_0(t,x,y):= q_0\left(t,x,S_0^{-1}(t,x,y)\right),
\end{align*}
and for $x\in \Omega$ and $y \in Y(0,x)$ the initial condition
\begin{align*}
\tilde{u}^0(x,y):= u^0\left(x,S_0^{-1}(0,x,y)\right).
\end{align*}
Finally, let $Q^T:= \bigcup_{(t,x)\in (0,T)\times \Omega} \{(t,x)\} \times Y(t,x)$ and $G^T:= \bigcup_{(t,x)\in (0,T)\times \Omega} \{(t,x)\} \times \Gamma(t,x)$. With this, an elemental calculation shows that for all $\phi \in C^1\left(\overline{Q^T}\right)$ with $\phi(T,\cdot) = 0$ and $Y$-periodic, one has  
\begin{align*}
-&\int_0^T \int_{\Omega}\int_{Y(t,x)} \tuo \partial_t \phi dy dx dt + \int_0^T \int_{\Omega}\int_{Y(t,x)}\left[ D \nabla_y \tuo - \tilde{q}_0 \tuo \right] \cdot \nabla_y \phi dy dx dt 
\\
=& \int_0^T \int_{\Omega}\int_{Y(t,x)} f(\tuo)\phi dy dx dt + \int_0^T \int_{\Omega}\int_{\Gamma(t,x)} g(\tuo)\phi d\sigma_y dx dt
\\
&+ \int_{\Omega}\int_{Y(0,x)} \tilde{u}^0 \phi(0) dy dx .
\end{align*}
In other words, $\tuo$ is the weak solution of the macroscopic problem defined in an evolving macroscopic domain   
\begin{align}
\begin{aligned}\label{HomogenizedModelEvolving}
\partial_t \tuo - \nabla_y \cdot \left(D \nabla_y \tuo - \tilde{q}_0 \tuo \right) &= f(\tilde{u_0}) &\mbox{ in }& Q^T,
\\
- D \nabla_y \tuo \cdot \nu &= - g(\tuo) &\mbox{ on }& G^T,
\\
\tuo(0) &= \tilde{u}^0 &\mbox{ in }& \bigcup_{x \in \Omega} \{x\} \times Y(0,x),
\\
\tuo \mbox{ is } Y&\mbox{-periodic.}
\end{aligned}
\end{align}
We emphasize that for regular enough data and solutions (which are not guaranteed by our assumptions), we would obtain $\tilde{q}_0(t,x,y) \cdot \nu = \partial_t S_0(t,x,y) \cdot \nu$ for almost every $(t,x,y) \in G^T$.

\section{Conclusion}
\label{SectionConclusion}

We derived a macroscopic model for a reaction-diffusion-advection problem defined in a domain with an evolving microstructure. The evolution is assumed known \textit{a priori}. We consider a low  diffusivity (of order $\epsilon^2$), and include nonlinear bulk and surface reactions. The effective problem depends on the micro- and the macro-variable, and the evolution of the underlying microstructure is approximated by time- and space-dependent reference elements $Y(t,x)$. Hence, in each macroscopic point $x$, we have to solve a local cell problem on $Y(t,x)$. We emphasize that our methods are not restricted to the scalar case, but can be extended easily to systems of equations with Lipschitz-continuous nonlinearities. In order to carry out the homogenization limit, we proved general two-scale compactness results  just based on \textit{a priori} estimates for sequences of functions with oscillating gradients, and low regularity with respect to time.  In doing so, we used the appropriately scaled function space $\he$, which allowed us to show compactness results, especially regarding the time-derivative. 

In general applications, however, the evolution of the microstructure is not known \textit{a priori}, and can be influenced by adsorption and desorption processes at the microscopic surface, or by mechanical forces. In such cases, one has to consider strongly coupled systems of transport, elasticity, and fluid flow equations in domains with an evolving microstructure, leading to highly nonlinear problems with free boundaries. In this context, the identification of the transformation $\seps$ and its control with respect to the parameter $\epsilon$ plays a crucial role. If this is achieved, the multi-scale methods developed in this paper can be employed in the study of such more complex applications. 

\section*{Acknowledgments}
MG and ISP were supported by the Research Foundation - Flanders (FWO) through the Odysseus programme (Project G0G1316N). MG was also supperted by the project SCIDATOS (Scientific Computing for Improved Detection and Therapy of Sepsis), which was funded by the Klaus Tschira Foundation, Germany (Grant Number 00.0277.2015).

  \bibliographystyle{abbrv} 
  \bibliography{literature}

\end{document}